\documentclass[12pt]{article}

\usepackage[a4paper,margin=31mm]{geometry}
\usepackage[T1]{fontenc}
\usepackage{amsmath,amssymb,amsthm,mathtools,mathrsfs}
\usepackage{microtype}
\usepackage{graphicx}
\usepackage{tikz}
\usetikzlibrary{arrows.meta,positioning}
\usepackage{float}
\usepackage[colorlinks=true,linkcolor=blue,citecolor=blue,urlcolor=blue]{hyperref}

\numberwithin{equation}{section}

\newtheorem{theorem}{Theorem}[section]
\newtheorem{proposition}[theorem]{Proposition}
\newtheorem{lemma}[theorem]{Lemma}

\theoremstyle{remark}
\newtheorem{remark}[theorem]{Remark}

\newcommand{\Area}{\operatorname{Area}}
\newcommand{\Det}{\operatorname{det}}
\newcommand{\Ind}{\operatorname{Ind}}
\newcommand{\Reg}{\operatorname{Reg}}
\newcommand{\CC}{\mathbb C}
\newcommand{\PP}{\mathbb P}
\newcommand{\cP}{\mathcal P}
\newcommand{\etaD}{\eta_{\mathrm D}}

\title{\bfseries Spectral determinants of flat metrics\\
on the bielliptic genus-two locus}
\author{Victor Kalvin}
\date{}

\begin{document}

\maketitle

\begin{abstract}
We obtain a closed explicit formula for the 
spectral determinant of flat conical metrics on the bielliptic genus-two
locus.  The metrics are generated by holomorphic one-forms with two
simple zeros.  For a symmetric reference metric, an exact Klein-four spectral
identity reduces the spectral determinant to scalar determinants on
spheres and tori; the singular anomaly formula then yields the general
case. The resulting
formula involves an elementary binary sextic in the
coefficients of the one-form and two explicit hypergeometric areas
of four-cone metrics on a quotient sphere.

We apply this formula to the separating, one-node nonseparating,
and simultaneous two-node degenerations of the curve and obtain
complete asymptotics of the spectral determinant in all cases.
The one-node degeneration has two distinct metric limits, according
to whether the limiting one-form is holomorphic or meromorphic.
Comparison of the cylindrical cases with the Bismut--Bost asymptotics
determines the corresponding constants explicitly; in the separating
case it also evaluates the relative determinant appearing in the
M\"uller--M\"uller formula.  As a by-product, the separating asymptotics
evaluate the multiplicative constants left undetermined in earlier
general conical and variational determinant formulas.
\end{abstract}

\section{Introduction}
\label{sec:introduction}
The purpose of this paper is to obtain a  closed
explicit formula for the spectral determinant over the full two-dimensional locus
of genus-two curves admitting an elliptic involution.

For the natural smooth hyperbolic metric on a general curve in this
locus, such an evaluation is obstructed by the absence of an explicit
uniformization.  For certain quasiplatonic surfaces, quotient relations
allow one to bypass this obstruction; this was carried out, for example, for the
Bolza surface and the Klein quartic in \cite{KalvinBolzaKlein}.

In this paper, we develop this quotient approach for the flat conical
metrics generated by holomorphic one-forms.  At the level of
uniformization, these metrics are fundamentally simpler: all relevant
data are explicit.  Together with an exact spectral reduction, this makes
it possible to carry out the determinant calculation completely over the
entire bielliptic locus.

We use the following two-parameter model of the bielliptic genus-two locus:
\begin{equation}\label{eq:curve}
X_{a,b}:\ y^2=(x^2-1)(x^2-a)(x^2-b),\quad  a,b\in\CC\setminus\{0,1\},\qquad a\ne b.
\end{equation}
The pair $(a,b)$ determines the complex structure of the compact Riemann surface $X_{a,b}$.  For
fixed $(a,b)$, consider the family
\begin{equation}\label{eq:omega-c}
\CC^2\setminus\{0\}\ni  \boldsymbol c=(c_0,c_1)\mapsto  \omega_{\boldsymbol c}
 =2(c_0+c_1x)\frac{dx}{y}.
 \end{equation}
As $\boldsymbol c$ varies over $\CC^2\setminus\{0\}$, the differential $\omega_{\boldsymbol c}$
runs through all nonzero holomorphic one-forms on $X_{a,b}$.  We assume
that $\omega_{\boldsymbol c}$ has two simple zeros.  The metric
$|\omega_{\boldsymbol c}|^2$ is flat away from these zeros and has at
each of them a conical singularity of order $1$, i.e. of angle $4\pi$.
The spectral determinant of the Friedrichs
Laplacian $\Delta_{|\omega_{\boldsymbol c}|^2}$ on
$(X_{a,b},|\omega_{\boldsymbol c}|^2)$ is defined in the standard way:
\[
 \Det\Delta_{|\omega_{\boldsymbol c}|^2}
 :=
 \exp\left\{
 -\left.\frac{d}{ds}\right|_{s=0}
 \sum_{\lambda>0}\lambda^{-s}
 \right\},
\]
where the eigenvalues are counted with multiplicity and the
sum is understood by analytic continuation.

The Klein-four reduction of  $X_{a,b}$ produces two double covers
of a sphere.  After M\"obius normalization, their branch sets are
$\{0,1,\lambda_j,\infty\}$, where
\begin{equation}\label{eq:lambda-coordinates}
 \lambda_1=\frac{b-1}{a-1},
 \qquad
 \lambda_2=\frac{a(1-b)}{a-b},
\end{equation}
respectively.  For either cover, the flat quotient metric on the base
sphere $\PP^1$ is
\begin{equation}\label{eq:legendre-base-metric}
 m_\lambda
 =|u(1-u)(\lambda-u)|^{-1}{|du|^2},\qquad \lambda\in\{\lambda_1,\lambda_2\}.
\end{equation}
 The metric $ m_\lambda$ has four conical singularities of order $-1/2$ (i.e. of angle
$\pi$) at $0$, $1$, $\lambda$, and $\infty$. 
The total area is 
\begin{equation}\label{eq:base-area-function}
 \Area(\PP^1,m_\lambda)
 =2\pi^2\Re\left(
 \overline{\lambda^{-1/2}F(\lambda^{-1})}
 F(1-\lambda)
 \right),\quad F(t):={}_2F_1(1/2,1/2;1;t).
\end{equation}
Here ${}_2F_1$ denotes the Gauss hypergeometric function.  For
$\lambda>1$, the square root and the two hypergeometric factors in
\eqref{eq:base-area-function} take their positive real values.  For
other $\lambda$, these factors are analytically continued from the
interval $(1,\infty)$; the resulting real quantity is single-valued.

Introduce the binary sextic
\begin{equation}\label{eq:binary-sextic}
 \cP_{a,b}(c_0,c_1)
 = (c_0^2-c_1^2)(c_0^2-ac_1^2)(c_0^2-bc_1^2). 
\end{equation}
The condition $\cP_{a,b}(c_0,c_1)\neq0$ is precisely the assumption
that $\omega_{\boldsymbol c}$ has two simple zeros.  Once the spectral determinant in the metric
$|\omega_{\boldsymbol c}|^2$ is known, the singular anomaly formula
\cite[Corollary~1.3]{KalvinJFA} extends the result to the general class of
conformal metrics with conical singularities considered there.

\begin{theorem}\label{thm:main}
Let $X_{a,b}$  be the surface in~\eqref{eq:curve} equipped with the flat metric $|\omega_{\boldsymbol c}|^2$. Suppose that
the binary sextic $\cP_{a,b}(c_0,c_1)$ in~\eqref{eq:binary-sextic} does not vanish.  Then
\begin{equation}\label{eq:main-formula}
 \begin{aligned}
 \frac{\Det\Delta_{|\omega_{\boldsymbol c}|^2}}
      {\Area(X_{a,b},|\omega_{\boldsymbol c}|^2)}
 &={}
 \frac{e^{6\zeta_R'(-1)}}{8\pi^3}
 |ab|^{-1/6}
 |\cP_{a,b}(c_0,c_1)|^{1/12}\\
 &\quad\times
 \prod_{j=1}^2
 \left(
 |\lambda_j(\lambda_j-1)|^{1/3}
 \Area(\PP^1,m_{\lambda_j})
 \right).
 \end{aligned}
\end{equation}
Here $\lambda_1$ and $\lambda_2$ are defined in~\eqref{eq:lambda-coordinates},
and $\zeta_R'(-1)$ denotes the derivative of the Riemann zeta function
at $-1$. Moreover, the area appearing on the left-hand side of
\eqref{eq:main-formula} satisfies
\begin{equation}\label{eq:general-area}
 \Area(X_{a,b},|\omega_{\boldsymbol c}|^2)
 =\frac{4|c_0|^2}{|a-b|}\Area(\PP^1,m_{\lambda_2})
 +\frac{4|c_1|^2}{|a-1|}\Area(\PP^1,m_{\lambda_1}),
\end{equation}
where $\Area(\PP^1,m_{\lambda})$ is given explicitly
by~\eqref{eq:base-area-function}.
\end{theorem}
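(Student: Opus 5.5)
The argument has two stages. First we evaluate the determinant for a symmetric reference metric, using a Klein-four spectral identity. Then we pass to arbitrary $\boldsymbol c$ with the singular anomaly formula.

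The automorphism group of $X_{a,b}$ contains the Klein four-group $V=\{1,\sigma,\tau,\sigma\tau\}$. Here $\sigma:(x,y)\mapsto(-x,y)$ and $\tau:(x,y)\mapsto(-x,-y)$ are the two elliptic involutions, and $\sigma\tau$ is the hyperelliptic involution. Pulling back, we get $\sigma^*\omega_{\boldsymbol c}=2(-c_0+c_1x)\,dx/y$ and $\tau^*\omega_{\boldsymbol c}=2(c_0-c_1x)\,dx/y$.

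For the reference metric we take $|\omega_{(1,0)}|^2+|\omega_{(0,1)}|^2$-type data. A cleaner choice is the pair of forms $\omega_0=2\,dx/y$ and $\omega_1=2x\,dx/y$. Each of these is anti-invariant under one elliptic involution and invariant under the other. Hence $|\omega_0|^2$ and $|\omega_1|^2$ are $V$-invariant metrics, although each has zeros of higher order. With $\omega_0$, for instance, the zeros sit at the two points over $x=\infty$.

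For a $V$-invariant metric $g$, decomposing $L^2$ into the four characters of $V$ gives the exact identity
\[
\Det\Delta_X\cdot(\Det\Delta_{\PP^1})^2=\Det\Delta_{E_1}\,\Det\Delta_{E_2}\,\Det\Delta_{X/\langle\sigma\tau\rangle}\cdot(\text{area factors}).
\]
Here $E_1$ and $E_2$ are the elliptic quotients. The quotients $X/\langle\sigma\tau\rangle$ and $X/V$ are spheres carrying flat conical metrics. The identity must be stated for $\det'$, with area factors tracking the zero modes.

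The elliptic quotients carry flat metrics $|\text{holomorphic form}|^2$, possibly with cone points of angle $2\pi$ that are removable. Their determinants are given by Ray--Singer/Kronecker: $\Det\Delta_E/\Area=\tau_2|\eta(\tau)|^4$ for the normalized torus. The modulus of $E_j$ is the Legendre modulus $\lambda_j$ of its branch set $\{0,1,\lambda_j,\infty\}$ after Möbius normalization. The sphere quotients carry polyhedral metrics with cone angles $\pi$ or other rational angles. For these we invoke the explicit determinant formula for flat conical metrics on the sphere, namely Aurell--Salomonson / Kalvin's $\PP^1$ formula. That formula expresses the determinant via $e^{\zeta_R'(-1)}$, products of $|z_i-z_j|^{\beta_i\beta_j\cdots}$, cone-angle-dependent constants, and the area.

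Combining these, we use the classical identity $|\eta(\tau)|^{24}\propto|\lambda(1-\lambda)|^{2}\,|\theta_3|^{24}$ together with $\theta_3^2=F(\lambda)$. The $\eta$-factors then become $|\lambda_j(\lambda_j-1)|^{1/3}$ times powers of hypergeometric periods. The periods combine with $\tau_2$ into exactly $\Area(\PP^1,m_{\lambda_j})$, since $\Im(\bar A B)$ of periods gives the area. This is where formula \eqref{eq:base-area-function} enters: the area of $m_\lambda$ equals $\tfrac12\int|du|^2/|u(1-u)(\lambda-u)|$, which is the Riemann bilinear expression $\Im(\bar\omega_1\omega_2)$ of the complete elliptic integrals $\pi F$.

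For the general $\boldsymbol c$ we apply \cite[Corollary~1.3]{KalvinJFA} to compare $|\omega_{\boldsymbol c}|^2=|(c_0+c_1x)/1|^2|\omega_0|^2$ with the reference metric. The conformal factor $|c_0+c_1x|^2$ is a $\log$-singular potential. The anomaly then reduces to explicit local terms at the cone points. At the zeros $x=\pm c_0/c_1$, the leading coefficients of the factor are $|\prod(\text{branch-point differences})|$, and these produce $|\cP_{a,b}(c_0,c_1)|^{1/12}$. The points over $\infty$ give the corresponding dependence on $|c_1|$, and the area ratio gives the rest. Homogeneity of degree $2$ in $\boldsymbol c$ of both $\Det$ and $\Area$ serves as a consistency check: $\cP$ has degree $6$, and $6/12\cdot2$ is balanced against the zeta-value term.

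Formula \eqref{eq:general-area} is direct. Write $|\omega_{\boldsymbol c}|^2=4|c_0+c_1x|^2|dx|^2/|y|^2$. The cross term $\Re(c_0\bar c_1\bar x)$ integrates to zero by $\sigma$-antisymmetry. The term $|c_0|^2|dx/y|^2$ descends via $u$ equal to a Möbius image of $x^2$, and $|c_1|^2|x\,dx/y|^2$ descends similarly. This yields the scaled pullbacks of $m_{\lambda_2}$ and $m_{\lambda_1}$ with Jacobian constants $1/|a-b|$ and $1/|a-1|$.

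The main obstacle is bookkeeping the constants. These include the zero-mode and area factors in the $V$-decomposition identity, the cone-angle constants in the sphere determinant formulas, and the local anomaly contributions at higher-order zeros of the reference form. Together they must produce exactly $e^{6\zeta_R'(-1)}/(8\pi^3)$ and $|ab|^{-1/6}$. I would first verify the constants on a degenerate-symmetric check, such as $b\to$ special values or a square-tiled case, before finalizing.
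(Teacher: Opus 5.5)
Your architecture matches the paper's: an Artin--Sunada identity for the $V$-invariant reference metric $|\omega_{(1,0)}|^2$, the Kronecker formula and the flat-sphere formula of \cite{KalvinJFA} for the quotients, theta identities turning $\Im\tau\,|\etaD(\tau)|^4$ into $|\lambda(\lambda-1)|^{1/3}\Area(\PP^1,m_\lambda)$, and \cite[Corollary~1.3]{KalvinJFA} to reach general $\boldsymbol c$. Your proof of \eqref{eq:general-area} is also the paper's.

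However, one step fails as stated. You assert that both elliptic quotients carry metrics of the form $|\text{holomorphic form}|^2$, so that Kronecker applies to each. You yourself observe that $\omega_0=2\,dx/y$ is anti-invariant under $\sigma$. Hence $\omega_0$ does not descend to $E_1=X_{a,b}/\langle\sigma\rangle$; only $|\omega_0|^2$ does, and the quotient metric is $m_1=|z|^{-1}|dz/Y|^2$. It has cone angle $\pi$ at the two images of the fixed points of $\sigma$ over $x=0$, where $\omega_0\neq0$ and $z=x^2$ branches. It has cone angle $4\pi$ at the single image of the zeros $\infty_\pm$ of $\omega_0$, which $\sigma$ swaps. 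Only $E_2=X_{a,b}/\langle h\sigma\rangle$ receives a smooth flat metric, $|dz/W|^2$. Nor can you substitute the smooth metric $|dz/Y|^2$ on $E_1$: the spectral identity requires all quotient metrics to pull back to the same metric on $X_{a,b}$, and $|dz/Y|^2$ pulls back to $|\omega_{(0,1)}|^2$.

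The missing ingredient is a singular anomaly computation on this torus. Compare $m_1$ with $|dz/Y|^2$ via \cite[Theorem~1.1]{KalvinJFA}, using:
\begin{itemize}
\item orders $-\tfrac12,-\tfrac12,1$;
\item local data $\phi=\psi=-\tfrac12\log|ab|$ over $z=0$ and $\phi=\psi=\log2$ at infinity;
\item the constants $C(-\tfrac12)$ and $C(1)$.
\end{itemize}
This yields $\Det\Delta_{E_1}=2^{1/3}e^{3\zeta_R'(-1)}|ab|^{1/12}\Area(E_1,m_1)\Im\tau_1|\etaD(\tau_1)|^4$, with $\Area(E_1,m_1)=2\Area(\PP^1,m_{\lambda_2})/|a-b|$.

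This is not mere bookkeeping. It supplies three things:
\begin{itemize}
\item half of the exponent in $e^{6\zeta_R'(-1)}$ (the other half comes from $S_h$; $S$ contributes none);
\item the factor $|ab|^{1/12}$ that turns the spheres' $|ab|^{1/12}\cdot|ab|^{-1/3}$ into $|ab|^{-1/6}$;
\item the mixed product $\prod_j\Area(\PP^1,m_{\lambda_j})$ in \eqref{eq:main-formula}, because the area of $m_1$ is governed by $\lambda_2$ while its conformal modulus is $\tau_1$.
\end{itemize}

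There are also smaller inaccuracies:
\begin{itemize}
\item $\omega_{(1,0)}$ and $\omega_{(0,1)}$ each have two simple zeros, not higher-order ones. This is what permits the simple-zero residue form \eqref{eq:abelian-comparison} of the anomaly formula in the final step.
\item The zeros of $\omega_{\boldsymbol c}$ lie over $x=-c_0/c_1$, not over $x=\pm c_0/c_1$.
\item With quotient metrics that pull back to the reference metric, the multiplicity identity holds for every $\lambda>0$, so no area factors enter.
\item $\Det$ scales like $|\mu|^{5/2}$, not $|\mu|^2$, because $\zeta(0)=-5/4$.
\end{itemize}
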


To the best of our knowledge, this is the first absolutely normalized
closed explicit evaluation of the spectral determinant throughout a
two-dimensional genus-two locus.
Moreover, the final formula~\eqref{eq:main-formula} is remarkably simple: apart from its explicit
universal constant, it involves only algebraic functions of
$a,b,c_0,c_1$ and two explicit hypergeometric areas.  No choice of
symplectic bases or period ratios enters its statement. For comparison, the determinant of the smooth unit-area Bergman
metric on the same locus was computed in
\cite{KleinKokotovKorotkinBergman} only up to a moduli-independent
multiplicative constant, which remains undetermined.

The decisive step in the proof of Theorem~\ref{thm:main} is to
establish the Klein-four spectral identity for the Friedrichs
Laplacian of a conical reference metric, thereby
reducing its determinant to explicitly evaluable scalar quotient
determinants.  Without this reduction, our method would leave the
reference determinant unevaluated and would not produce a closed formula.

We  also apply Theorem~\ref{thm:main} to obtain complete asymptotics of the spectral
determinant under three degenerations of the curve $X_{a,b}$  In the
separating case the curve splits into two elliptic components.  In the
one-node nonseparating case, the normalization of the limiting nodal
curve is an elliptic curve, and the metric limit depends on the
differential: for $c_0=0$ its metric
completion is a smooth flat torus, whereas for $c_0\ne0$ it has two
cylindrical ends.  In the simultaneous two-node degeneration, the
normalization of the limiting nodal curve is a rational curve and the
metric limit has four cylindrical ends.  All the asymptotic formulas include their absolute leading
constants.

The degeneration formulas are then compared with the general
asymptotics of Bismut--Bost~\cite{BismutBost}.  In the separating case,
the M\"uller--M\"uller analytic-surgery formula
\cite[(7.39)]{MullerMuller} identifies the leading coefficient in the
general Bismut--Bost asymptotics with a relative determinant.  For the
family considered here, our calculation evaluates this coefficient, and
hence the relative determinant, explicitly.  The formulas for the
meromorphic one-node and simultaneous two-node degenerations likewise
determine their Bismut--Bost constants explicitly.

As a further by-product, the separating asymptotics determine the
multiplicative constants left undetermined in the degeneration formula
\cite{KokotovDegeneration} and in the variational determinant formula in
every genus~\cite{KokotovKorotkin}.  Finally, the same asymptotics
determine the universal constant in
\cite[Proposition~3.9]{KalvinJFA} and thus provide an absolutely
normalized determinant formula for all admissible conical metrics
considered there in every genus $g>1$; see
Remark~\ref{rem:jfa-normalization}.

The paper is organized as follows.  Section~\ref{sec:klein-four-quotient-geometry}
describes the Klein-four quotient geometry, and
Section~\ref{sec:artin-sunada-spectral-reduction} establishes the
Artin--Sunada spectral identity.  Sections~\ref{sec:spherical-quotient-determinants}
and~\ref{sec:elliptic-quotient-determinants} evaluate the spherical and
elliptic quotient determinants.  Section~\ref{sec:main-determinant} proves
Theorem~\ref{thm:main}, and Section~\ref{sec:degenerations} studies the
separating and nonseparating degenerations and the corresponding
asymptotics of the spectral determinant.

\section{The Klein-four quotient geometry
}
\label{sec:klein-four-quotient-geometry}

Consider the hyperelliptic involution $h$ and an elliptic involution $\sigma$, 
\begin{equation}\label{eq:v4-actions}
 h(x,y)=(x,-y),
 \qquad
 \sigma(x,y)=(-x,y),
\end{equation}
of the compact Riemann surface $X_{a,b}$ in~\eqref{eq:curve}. They commute and generate the Klein four-group
\[
 V=\{1,h,\sigma,h\sigma\}\cong C_2\times C_2
\]
acting on $X_{a,b}$.  The function $z=x^2$ is invariant under $V$.
The hyperelliptic projection $x:X_{a,b}\to\PP^1_x$ and the map
$x\mapsto x^2$ both have degree two, so $z$ has degree four.  Since
$V$ has order four, $z$ realizes the quotient map by $V$:
\begin{equation}\label{eq:full-quotient-map}
 q:X_{a,b}\longrightarrow S:=X_{a,b}/V\cong\PP^1_z,
 \qquad z=x^2.
\end{equation}
The branch values of $q$ are
\begin{equation}\label{eq:five-branch-values}
 0,\quad \infty,\quad 1,\quad a,\quad b.
\end{equation}
The first two come from the fixed points of $\sigma$ and
$h\sigma$, respectively, while the last three  come from the six fixed
points of $h$.  Thus the five marked points have two complex moduli,
as is also clear from \eqref{eq:curve}.

The quotients by the three subgroups of order two are
\begin{equation}\label{eq:intermediate-quotients}
 E_1=X_{a,b}/\langle\sigma\rangle,
 \qquad
 E_2=X_{a,b}/\langle h\sigma\rangle,
 \qquad
 S_h=X_{a,b}/\langle h\rangle.
\end{equation}
Let $\pi_j:X_{a,b}\to E_j$ denote the quotient maps.  The
$\sigma$-invariant functions $(z,Y)=(x^2,y)$ and the
$h\sigma$-invariant functions $(z,W)=(x^2,xy)$, respectively, put the
quotient curves into the forms
\begin{equation}\label{eq:elliptic-quotients-intro}
 \begin{split}
 E_1&:\quad Y^2=(z-1)(z-a)(z-b),\\
 E_2&:\quad W^2=z(z-1)(z-a)(z-b).
 \end{split}
\end{equation}
Thus $E_1$ and $E_2$ are elliptic curves, while the hyperelliptic
quotient $S_h$ is the $x$-sphere.

The forms
\begin{equation}\label{eq:canonical-differentials}
 \omega_{(1,0)}=2{dx}/{y},
 \qquad
 \omega_{(0,1)}=2x{dx}/{y}
\end{equation}
form a basis of the space of holomorphic one-forms on $X_{a,b}$;
see~\eqref{eq:omega-c}.  For every $g\in V$, the pullback $g^*$
preserves each of the two forms up to sign.
They are the pullbacks of the holomorphic differentials on the two
elliptic quotients:
\begin{equation}\label{eq:differential-pullbacks}
 \omega_{(1,0)}=\pi_2^*\frac{dz}{W},
 \qquad
 \omega_{(0,1)}=\pi_1^*\frac{dz}{Y},
\end{equation}
where $W$ and $Y$ are the same as in~\eqref{eq:elliptic-quotients-intro}.
The zero divisor of $\omega_{(1,0)}$ consists of the two points at
infinity, whereas that of $\omega_{(0,1)}$ consists of the two points
over $x=0$.

For $\lambda>1$ consider the model elliptic curve $$E_\lambda:v^2=u(1-u)(\lambda-u)$$ and the holomorphic differential 
$w_\lambda={du}/{v}$.  To fix the sign of $w_\lambda$
and the lifts of the integration intervals, choose $v>0$ for
$u\in(0,1)$ and continue $v$ through the upper half-plane across
$u=1$.  Then, for $u\in(1,\lambda)$,
\[
 v=-i\sqrt{u(u-1)(\lambda-u)},
\]
where the square root on the right is positive.  With these
conventions, the integrals
\begin{equation}\label{eq:base-edge-integrals}
 I_0(\lambda)=\int_0^1w_\lambda,
 \qquad
 I_1(\lambda)=\int_1^\lambda w_\lambda
\end{equation}
are half-periods of  $w_\lambda$ on
$E_\lambda$, so that $I_0(\lambda)>0$,
$ i I_1(\lambda)<0$, and
$\Im\bigl(\overline{I_0(\lambda)}I_1(\lambda)\bigr)>0$.
Euler's integral representation of the Gauss hypergeometric function,
\[
 {}_2F_1\left(\frac12,\frac12;1;t\right)
 =\frac1\pi\int_0^1
 \frac{ds}{\sqrt{s(1-s)(1-ts)}},
 \qquad t<1,
\]
applies directly to the half-periods in
\eqref{eq:base-edge-integrals}.  Factoring $\sqrt\lambda$ out of the
denominator in $I_0$ and setting $u=1+(\lambda-1)s$ in $I_1$ yields
\[
 I_0(\lambda)
 =\frac{\pi}{\sqrt\lambda}\,
 {}_2F_1\left(\frac12,\frac12;1;\lambda^{-1}\right),\qquad
 I_1(\lambda)
 =i\pi\,
 {}_2F_1\left(\frac12,\frac12;1;1-\lambda\right),
\]
where $\sqrt\lambda>0$.  Since $\lambda^{-1}\in(0,1)$ and
$1-\lambda<0$, both hypergeometric functions are represented by the
preceding integral.  For general
$\lambda\in\CC\setminus\{0,1\}$, the differential $w_\lambda$ and the
period pair $(I_0,I_1)$ are defined by analytic continuation of the
preceding expressions from the interval $(1,+\infty)$.  Although the
period pair is generally multivalued, its monodromy is symplectic and
therefore preserves $2\Im(\overline{I_0}I_1)$.

Under the involution $(u,v)\mapsto(u,-v)$ of the double cover
$E_\lambda\to\mathbb P^1$, the differential $w_\lambda$ changes sign,
whereas the metric $|w_\lambda|^2$ is invariant and therefore descends
to the flat four-cone metric $m_\lambda$
in~\eqref{eq:legendre-base-metric} on $\mathbb P^1$.
The second Riemann bilinear relation on $E_\lambda$ gives
\[
 \Area(\PP^1,m_\lambda)
 =2\Im\bigl(\overline{I_0(\lambda)}I_1(\lambda)\bigr).
\]
Together with the hypergeometric expressions for the two half-periods,
this implies~\eqref{eq:base-area-function}.

Next, we relate the model elliptic curve $E_\lambda$ to  the elliptic curves $E_j$ in \eqref{eq:elliptic-quotients-intro}. 
 On the quotient sphere $S$ from~\eqref{eq:full-quotient-map}, set
\begin{equation}\label{eq:two-base-metrics}
m_S^{(0,1)}
 =|(z-1)(z-a)(z-b)|^{-1}|dz|^2 ,
 \quad
m_S^{(1,0)}
 =|z(z-1)(z-a)(z-b)|^{-1}|dz|^2.
\end{equation}
The substitutions
\[
 u=\frac{z-1}{a-1}\quad\text{on }E_1,
 \qquad
 u=\frac{(1-b)z}{z-b}\quad\text{on }E_2
\]
transform the two metrics according to
\begin{equation}\label{eq:base-legendre-pullbacks}
 m_S^{(0,1)}=\frac1{|a-1|}m_{\lambda_1},
 \qquad
 m_S^{(1,0)}=\frac1{|a-b|}m_{\lambda_2},
\end{equation}
where $\lambda_1$ and $\lambda_2$ are defined
in~\eqref{eq:lambda-coordinates} and $m_\lambda$ is the flat four-cone metric in~\eqref{eq:legendre-base-metric}.
The period ratios of the elliptic curves $E_j$ in \eqref{eq:elliptic-quotients-intro} are therefore
\[
 \tau_j=I_1(\lambda_j)/{I_0(\lambda_j)},\quad \Im  \tau_j>0,
 \qquad j=1,2.
\]
The degree-two covers
$E_1\to(S,m_S^{(0,1)})$ and  $E_2\to(S,m_S^{(1,0)})$, together with
\eqref{eq:base-legendre-pullbacks}, imply
\begin{equation}\label{eq:explicit-base-areas}
 \begin{aligned}
  \Area(S,m_S^{(0,1)})
 &=\frac12\Area(E_1,|dz/Y|^2)
 =\frac{\Area(\PP^1,m_{\lambda_1})}{|a-1|},\\
 \Area(S,m_S^{(1,0)})
 &=\frac12\Area(E_2,|dz/W|^2)
 =\frac{\Area(\PP^1,m_{\lambda_2})}{|a-b|}.
 \end{aligned}
\end{equation}
Finally, summing the area density of
$\omega_{\boldsymbol c}=c_0\omega_{(1,0)}+c_1\omega_{(0,1)}$ over the four
sheets of $q:X_{a,b}\to S$ cancels the mixed terms.  Using
\eqref{eq:explicit-base-areas}, we obtain
\[
 \begin{aligned}
 \Area(X_{a,b},|\omega_{\boldsymbol c}|^2)
 ={}&4|c_0|^2\Area(S,m_S^{(1,0)})
 +4|c_1|^2\Area(S,m_S^{(0,1)})\\
 ={}&2|c_0|^2\Area(E_2,|dz/W|^2)
 +2|c_1|^2\Area(E_1,|dz/Y|^2)\\
 ={}&\frac{4|c_0|^2}{|a-b|}\Area(\PP^1,m_{\lambda_2})
 +\frac{4|c_1|^2}{|a-1|}\Area(\PP^1,m_{\lambda_1}).
 \end{aligned}
\]
This is exactly the result stated in \eqref{eq:general-area}. The previously deduced
formula~\eqref{eq:base-area-function} for the area of $(\PP^1,m_\lambda)$ expresses $ \Area(X_{a,b},|\omega_{\boldsymbol c}|^2)$  in terms of  hypergeometric
functions.

\section{Artin--Sunada spectral reduction}
\label{sec:artin-sunada-spectral-reduction}

The  flat metric $|\omega_{\boldsymbol c}|^2$ is not necessarily
$V$-invariant.  Indeed, when $c_0c_1\ne0$, neither elliptic
involution preserves the metric, and the spectral quotient identity
below does not apply.  We therefore use $|\omega_{(1,0)}|^2$ as the
reference metric.  The second $V$-eigenform $\omega_{(0,1)}$ could
equally be used, but one symmetric reference value suffices: the
determinant for arbitrary $\omega_{\boldsymbol c}$ then follows from
the singular anomaly formula
\cite[Corollary~1.3]{KalvinJFA}.

The underlying $V_4$ permutation-representation identity is an
Artin--Sunada relation \cite{Sunada,Parzanchevski}; for smooth metrics its spectral consequence is
\cite[Corollary~3.3]{Parzanchevski}.  For the singular metric $|\omega_{(1,0)}|^2$
 the corresponding
consequence for the Friedrichs Laplacians is verified below.  In general, the
representation-theoretic identity alone does not guarantee an explicit
evaluation of the spectral determinant: higher-genus quotients or twisted determinants may remain.
The key point here is that the reduction closes entirely on scalar
Laplacians on the spherical and elliptic quotients of $X_{a,b}$.  All
these quotient determinants are evaluated explicitly in
Sections~\ref{sec:spherical-quotient-determinants}
and~\ref{sec:elliptic-quotient-determinants}.

\begin{proposition}[Artin-Sunada reduction]\label{prop:v4-flat-reduction} Equip  the quotient sphere $S$ in~\eqref{eq:full-quotient-map} with the metric 
$m_S^{(1,0)}$ in~\eqref{eq:two-base-metrics}.
Then the pullback metric on $X_{a,b}$ is
$
 q^*m_S^{(1,0)}
 =|\omega_{(1,0)}|^2$.
The three intermediate quotients  $E_1, E_2,S_h$ are then naturally equipped with the following metrics:
\begin{equation}\label{eq:quotient-metrics}
 \begin{aligned}
 m_1&=\frac1{|z|}\left|\frac{dz}{Y}\right|^2
       &&\text{on }E_1,\\
 m_2&=\left|\frac{dz}{W}\right|^2
       &&\text{on }E_2,\\
 m_h&=\frac{4|dx|^2}
 {|(x^2-1)(x^2-a)(x^2-b)|}
       &&\text{on }S_h.
 \end{aligned}
\end{equation}
 Let $\Delta_{E_j}$, $\Delta_{S_h}$, and $\Delta_S$ stand for the Friedrichs Laplacians on the  surfaces $(E_j, m_j)$, $(S_h,m_h)$, and $(S, m_S^{(1,0)})$ respectively. 
Then their spectral determinants satisfy the relation
\begin{equation}\label{eq:v4-flat-reduction}
 \Det\Delta_{|\omega_{(1,0)}|^2}
 =\frac{
  \Det\Delta_{E_1}\,
  \Det\Delta_{E_2}\,
  \Det\Delta_{S_h}
 }{
  (\Det\Delta_{S})^2
 }.
\end{equation}
\end{proposition}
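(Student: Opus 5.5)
We first verify the metric statements and then pass to the spectral identity. Since $q(x,y)=x^2=z$, the pullback of $dz$ is $2x\,dx$. Using $W=xy$ on $X_{a,b}$, the pullback of $z(z-1)(z-a)(z-b)$ is $x^2y^2$. Hence
\[
 q^*m_S^{(1,0)}=\frac{4|x|^2|dx|^2}{|x|^2|y|^2}=\Bigl|\frac{2\,dx}{y}\Bigr|^2=|\omega_{(1,0)}|^2 .
\]
Every $g\in V$ acts on $\omega_{(1,0)}$ by a sign, so $|\omega_{(1,0)}|^2$ is $V$-invariant. For each subgroup $H\in\{\langle\sigma\rangle,\langle h\sigma\rangle,\langle h\rangle\}$ it therefore descends to $X_{a,b}/H$, and we check that the descended metric is the one listed in \eqref{eq:quotient-metrics}.
\begin{itemize}
\item On $E_2$ we have $\omega_{(1,0)}=\pi_2^*(dz/W)$ by \eqref{eq:differential-pullbacks}, so the descended metric is $m_2$.
\item On $E_1$ we use $(z,Y)$ with $Y=y$. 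Then $|2dx/y|^2=|dz|^2/(|x|^2|y|^2)$ gives $|z|^{-1}|dz/Y|^2=m_1$.
\item On $S_h$ the expression is already in the $x$-coordinate, $4|dx|^2/|y|^2$, which is $m_h$.
\end{itemize}
Each of these is a flat metric with conical points at the images of the fixed points, and the angles are computed from the branching orders.

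For the spectral identity, the plan is as follows. The Friedrichs Laplacian $\Delta=\Delta_{|\omega_{(1,0)}|^2}$ on $X=X_{a,b}$ commutes with the isometric action of $V$. This holds because the Friedrichs extension is defined from the quadratic form $\int|\nabla f|^2$ on $C_c^\infty(X\setminus\{\text{cones}\})$, and $V$ permutes the cone points. So each eigenspace $\mathcal E_\mu\subset L^2(X)$ is a $V$-representation and splits into the four characters $\chi_{\epsilon}$ of $V$. The space of $H$-invariants $\mathcal E_\mu^H$ equals the span of the characters trivial on $H$. Counting these gives, for each of the four characters, the identity
\[
 \dim\mathcal E_\mu+2\dim\mathcal E_\mu^{V}=\dim\mathcal E_\mu^{\langle\sigma\rangle}+\dim\mathcal E_\mu^{\langle h\sigma\rangle}+\dim\mathcal E_\mu^{\langle h\rangle}.
\]
The trivial character contributes $1+2=3$ on both sides, and each nontrivial character contributes $1$ on both sides. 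This is the Artin--Sunada relation $\mathbf 1_{\{1\}}+2\cdot\mathbf 1_V=\sum_H \mathbf 1_H$ for induced representations.

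The key analytic step is to identify $\mathcal E_\mu^H$ with the $\mu$-eigenspace of the Friedrichs Laplacian on the quotient $(X/H,m)$, and likewise for $V$ and $(S,m_S^{(1,0)})$. Pullback $f\mapsto \pi^*f$ is an isometry, up to the factor $|H|$, from $L^2(X/H)$ onto $L^2(X)^H$. It intertwines the Dirichlet forms and maps $C_c^\infty$ of the quotient minus its cone and branch points into $H$-invariant $C_c^\infty$ functions on $X$ minus the preimages. The main obstacle is that the Friedrichs form domains agree, since the Friedrichs extension is defined from different cores (functions vanishing near cone points, as opposed to near regular branch points). For this I would show the following.
\begin{enumerate}
\item A finite set of points has zero $H^1$-capacity in two dimensions. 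Hence on both $X$ and the quotient, the closure of $C_c^\infty$ of the complement of finitely many points in the form norm is the full $H^1$ space of the metric. For cone metrics this form domain is the standard $H^1$, independent of the excised finite set.
\item Averaging over $H$ commutes with the form and preserves the core. Hence the form domain of the restriction to $L^2(X)^H$ is the $H$-invariant part of $H^1(X)$, and this equals $\pi^*H^1(X/H)$.
\end{enumerate}
Then the restricted Friedrichs operator is unitarily equivalent to the quotient Friedrichs operator, and the eigenvalue multiplicities satisfy the displayed identity.

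Finally, form the spectral zeta functions. The multiplicity identity gives
\[
 \zeta_X+2\zeta_S=\zeta_{E_1}+\zeta_{E_2}+\zeta_{S_h}
\]
for $\Re s$ large, and hence after meromorphic continuation. The zero modes are excluded consistently, since each surface has a one-dimensional kernel (constants) and $1+2=3$. Differentiating at $s=0$ yields \eqref{eq:v4-flat-reduction}. Regularity at $s=0$ of each zeta function follows from the standard heat asymptotics for conical surfaces.
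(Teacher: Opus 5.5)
Your proposal is correct and follows essentially the same route as the paper. The paper likewise uses the zero-capacity core argument to show that $v\mapsto(\deg\pi_H)^{-1/2}\pi_H^*v$ is a unitary map from the quotient form domain onto the $H$-invariant part of the form domain on $X_{a,b}$. It then combines this with the identity $\Ind_{\langle h\rangle}^{V}\mathbf 1+\Ind_{\langle\sigma\rangle}^{V}\mathbf 1+\Ind_{\langle h\sigma\rangle}^{V}\mathbf 1=\Reg_V+2\mathbf 1_V$ (your character count, phrased there through Frobenius reciprocity) and the same zeta-function argument. Your explicit verification of the pulled-back and descended metrics is a useful addition that the paper leaves to Section~2.
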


\begin{remark}
Note that  the metric $m_S^{(1,0)}$ on $S$ has conical singularities of order
$-1/2$ at $z\in\{0,1,a,b\}$, i.e. four cone angles $\pi$.
Among the intermediate quotients, $(E_2,m_2)$ is a smooth flat torus.  The metric $m_1$ on $E_1$ has conical singularities of order
$-1/2$ at the two points over $0\in\PP^1_z$ and a conical
singularity of order $1$ at the unique point over
$\infty\in\PP^1_z$, i.e. two
cone angles $\pi$ and one cone angle $4\pi$.  Finally, the metric
$m_h$ on the sphere $S_h$ has conical singularities of order $-1/2$ at the six
points $x\in\{\pm1,\pm\sqrt a,\pm\sqrt b\}$ and of order $1$ at
$x=\infty$, i.e. six cone angles $\pi$ and one cone angle $4\pi$.
\end{remark}

\begin{proof} [Proof of Proposition~\ref{prop:v4-flat-reduction}] For a subgroup $H\leq V$, let
\[
 \pi_H:X_{a,b}\longrightarrow X_{a,b}/H
\]
be the quotient map and let $m_H$ be the quotient metric,
characterized away from the ramification points by
 $\pi_H^*m_H=|\omega_{(1,0)}|^2$.
For every smooth function $v$ supported away from the conical and branch  values,
\[
 \|\pi_H^*v\|_{L^2(X_{a,b},|\omega_{(1,0)}|^2)}^2
 =\deg\pi_H\,\|v\|_{L^2(X_{a,b}/H,m_H)}^2
\]
and
\[
 \mathcal Q_{X_{a,b}}(\pi_H^*v)=\deg\pi_H\,\mathcal Q_H(v),
\]
where $\mathcal Q_{X_{a,b}}$ and $\mathcal Q_H$ are the corresponding quadratic (energy)
forms.  The omitted points have zero $H^1$-capacity, so these test
functions form a core for the corresponding Friedrichs forms.
Passing to the closures shows that
\[
 v\longmapsto (\deg\pi_H)^{-1/2}\pi_H^*v
\]
is a unitary map from the domain  of the quadratic form $\mathcal Q_H$ on $X_{a,b}/H$ onto
the $H$-invariant part of the domain of $\mathcal Q_{X_{a,b}}$  on $X_{a,b}$.


Now the elementary identity of $V$-representations
\begin{equation}\label{eq:v4-representation}
 \Ind_{\langle h\rangle}^{V}{\bf1}
 +\Ind_{\langle\sigma\rangle}^{V}{\bf1}
 +\Ind_{\langle h\sigma\rangle}^{V}{\bf1}
 =\Reg_V+2{\bf1}_V
\end{equation}
together with Frobenius reciprocity implies, for every $\lambda>0$,
\begin{equation}\label{eq:v4-eigenvalue-multiplicities}
 \begin{aligned}
 \dim\ker(\Delta_{|\omega_{(1,0)}|^2}-\lambda)
 ={}&\dim\ker(\Delta_{S_h}-\lambda)
 +\dim\ker(\Delta_{E_1}-\lambda)\\
 &+\dim\ker(\Delta_{E_2}-\lambda)
 -2\dim\ker(\Delta_{S}-\lambda).
 \end{aligned}
\end{equation}
Consequently,  for the spectral zeta functions we have
\[
 \zeta_{|\omega_{(1,0)}|^2}(s)
 =\zeta_{S_h}(s)+\zeta_{E_1}(s)+\zeta_{E_2}(s)
 -2\zeta_{S}(s),
\]
initially for $\Re s>1$ and then by meromorphic continuation.
Differentiating the latter equality at $s=0$ gives
\eqref{eq:v4-flat-reduction}.
\end{proof}


\section{Determinants of the two spherical quotients}
\label{sec:spherical-quotient-determinants}
The general singular anomaly formula~\cite{KalvinJFA} was  already specialized to the case of any flat metric on $\PP^1$ in~\cite[Proposition~3.3]{KalvinJFA}. Moreover, for four
singularities of order $-1/2$ it was subsequently written in unit-area
form in \cite[Section~4]{KalvinAnnali}. In the present argument the same
four-cone metric occurs intrinsically as the  quotient metric $m_S^{(1,0)}$ in the
Klein-four reduction.  It is straightforward to implement those results from~\cite{KalvinJFA,KalvinAnnali} here and obtain the required explicit expressions for the determinants $\Det\Delta_{S}$ and $\Det\Delta_{S_h}$ on the flat conical spheres. Therefore we only state the result in Lemma~\ref{Lmm} below and omit the details.

\begin{lemma}[Determinants of the two spherical quotients]\label{Lmm}
\leavevmode
\begin{enumerate}
\item
For the determinant of the Friedrichs Laplacian on the sphere $S$, equipped with the metric $m_S^{(1,0)}$  having conical singularities of order $-1/2$ at the points $z\in\{0,1,a,b\}$,
 one has
\begin{equation}\label{eq:base-sphere-determinant}
\begin{aligned}
 \log\Det\Delta_S
 ={}&\log\Area(S,m_S^{(1,0)})
 +\frac16\log|ab(1-a)(1-b)(a-b)|\\
 &-\frac23\log2-\log\pi.
\end{aligned}
\end{equation}
\item For the determinant of the Friedrichs Laplacian on the sphere $S_h$, equipped with the metric $m_h$ having conical singularities of order $-1/2$ at the six points $x\in\{\pm 1,\pm \sqrt a,\pm \sqrt b\}$  and of order $1$ at infinity, 
one has 
\begin{equation}\label{eq:hyperelliptic-sphere-determinant}
\begin{aligned}
 \log\Det\Delta_{S_h}
 ={}&\log\Area(S_h,m_h)
 +\frac13\log|(1-a)(1-b)(a-b)|\\
 &+\frac1{12}\log|ab|
 +3\zeta_R'(-1)-\log(2\pi).
\end{aligned}
\end{equation}
\end{enumerate}
The areas in these formulas are related to the area explicitly  evaluated in
\eqref{eq:base-area-function} by
\[
 \Area(S,m_S^{(1,0)})
 =\frac{\Area(\PP^1,m_{\lambda_2})}{|a-b|},
 \qquad
 \Area(S_h,m_h)
 =\frac{2\Area(\PP^1,m_{\lambda_2})}{|a-b|}.
\]
\end{lemma}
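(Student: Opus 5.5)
The plan is to apply the explicit determinant formula for flat conical metrics on $\PP^1$ from \cite[Proposition~3.3]{KalvinJFA}. For a metric $m=c^2\prod_k|z-p_k|^{2\beta_k}|dz|^2$ on $\PP^1$ with finite cones of orders $\beta_k>-1$ and order $\beta_\infty=-2-\sum_k\beta_k$ at infinity, that formula has the shape
\[
\log\frac{\Det\Delta_m}{\Area(\PP^1,m)}=C(\boldsymbol\beta)+\frac16\sum_{j<k}\frac{\beta_j\beta_k}{\ldots}\log|p_j-p_k|+(\text{terms in }\log|c|),
\]
where the coefficient of $\log|p_j-p_k|$ is a universal rational function of the orders. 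The constant $C(\boldsymbol\beta)$ is a sum over cones of explicit local contributions: Barnes double-gamma / $\zeta_R'(-1)$ type constants depending only on the order of each cone.

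I would first rewrite both metrics in this normal form. For $m_S^{(1,0)}=|z(z-1)(z-a)(z-b)|^{-1}|dz|^2$, the finite cones have order $-1/2$ at $0,1,a,b$ and the cone at infinity has order $-2+2=0$, so $\infty$ is a regular point. All pair coefficients are then equal, giving the symmetric discriminant-type expression $|ab(1-a)(1-b)(a-b)|$ to the power $1/6$. For $m_h=4|(x^2-1)(x^2-a)(x^2-b)|^{-1}|dx|^2$, there are six cones of order $-1/2$ at $\pm1,\pm\sqrt a,\pm\sqrt b$ and $\beta_\infty=-2+3=1$. The pairwise differences multiply out to
\[
\prod|x_j-x_k|=|2\cdot2\sqrt a\cdot2\sqrt b|\cdot|(1-a)(1-b)(a-b)|^{2}.
\]
Here each of the factors $(1-a)$, $(1-b)$, $(a-b)$ arises from the four cross-differences $\pm1$ versus $\pm\sqrt a$ and so on, as $|1-a|^2$, while the three antipodal pairs give $2$, $2\sqrt a$, $2\sqrt b$. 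With the exponent from the formula, this produces $\frac13\log|(1-a)(1-b)(a-b)|+\frac1{12}\log|ab|$ plus powers of $2$. The scaling factor $4$ in $m_h$ and the powers of $2$ are absorbed into the constants.

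The constants would then be evaluated using the special values of the local cone contributions. For order $-1/2$ (angle $\pi$), the contributions are elementary, since the cone of angle $\pi$ is a $\mathbb Z_2$ quotient of a smooth point. For order $1$ (angle $4\pi$), the contribution involves $\zeta_R'(-1)$; this is the source of the $3\zeta_R'(-1)$ in \eqref{eq:hyperelliptic-sphere-determinant} and of its absence in \eqref{eq:base-sphere-determinant}. As a cross-check on $S$, I would compare with the unit-area four-cone formula of \cite[Section~4]{KalvinAnnali}, which should reproduce the constant $-\frac23\log2-\log\pi$.

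The area relations follow from \eqref{eq:explicit-base-areas} and from the fact that $x\mapsto z=x^2$ is a degree-two map with $m_h$ equal to the pullback of $m_S^{(1,0)}$. Indeed,
\[
|dz|^2/|z\cdots|=4|x|^2|dx|^2/(|x|^2|\cdots|),
\]
so the area doubles. I expect the main obstacle to be the careful bookkeeping of the universal constants, especially the $\log 2$ terms coming from the factor $4$ in $m_h$, the antipodal differences, and the normalization conventions of $\Area$ in \cite{KalvinJFA}. I would handle this by first checking the formula on the smooth round-sphere/torus-quotient cases before specializing.
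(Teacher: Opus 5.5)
Your plan is the paper's route: the paper likewise obtains both formulas by specializing the flat-sphere formula \cite[Proposition~3.3]{KalvinJFA} (with \cite[Section~4]{KalvinAnnali} as the four-cone check) and omits the details. Your ingredients are correct: the orders $\beta_\infty=0$ on $S$ and $\beta_\infty=1$ on $S_h$, the pair product $8|ab|^{1/2}|(1-a)(1-b)(a-b)|^2$, and the fact that $m_h$ is the pullback of $m_S^{(1,0)}$ under $z=x^2$.

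One caution concerns the constant bookkeeping. In the normalization of \cite{KalvinJFA} used here, $C(-1/2)=-\zeta_R'(-1)-\frac16\log2+\frac1{24}$ is not elementary. With the round-sphere value $\frac12-4\zeta_R'(-1)$ included, the $\zeta_R'(-1)$-coefficients are $4-4=0$ on $S$ and $6+1-4=3$ on $S_h$. Your attribution (angle-$\pi$ cones contribute nothing, the angle-$4\pi$ cone contributes $3\zeta_R'(-1)$) holds only after redistributing $-4\zeta_R'(-1)=2\zeta_R'(-1)\sum_P\beta_P$ among the cones.
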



\section{Determinants of the two elliptic quotients}
\label{sec:elliptic-quotient-determinants}

As we show in the proof of Lemma~\ref{lem:elliptic-determinants} below,
the determinant of the smooth quotient $(E_2,m_2)$ follows directly
from the Kronecker limit formula, whereas the singular anomaly formula
reduces the determinant of $(E_1,m_1)$ to that of a smooth flat torus.
Classical theta identities then eliminate the resulting modular factors.

\begin{lemma}[Determinants of the two elliptic quotients]
\label{lem:elliptic-determinants}
The determinants of the Friedrichs Laplacians on $(E_2,m_2)$ and
$(E_1,m_1)$ are
\begin{equation}\label{eq:e2-determinant}
 \Det\Delta_{E_2}
 =\frac{|\lambda_2(\lambda_2-1)|^{1/3}}
 {2^{4/3}\pi^2|a-b|}
\left( \Area(\PP^1,m_{\lambda_2})\right)^2
\end{equation}
and
\begin{equation}\label{eq:e1-anomaly}
 \Det\Delta_{E_1}
 =\frac{e^{3\zeta_R'(-1)}|ab|^{1/12}}
 {2\pi^2|a-b|}
 |\lambda_1(\lambda_1-1)|^{1/3}
 \prod_{j=1}^2\Area(\PP^1,m_{\lambda_j}).
\end{equation}
Here $\lambda_j$ are the same as in~\eqref{eq:lambda-coordinates} and the areas are explicitly expressed in~\eqref{eq:base-area-function}.
\end{lemma}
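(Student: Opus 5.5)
The plan is to treat the two tori separately: the Kronecker limit formula for the smooth torus $(E_2,m_2)$, and the singular anomaly formula to reduce $(E_1,m_1)$ to a smooth flat torus.

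\emph{The smooth torus $(E_2,m_2)$.} This is a flat torus of area $A_2=\Area(E_2,|dz/W|^2)=2\Area(\PP^1,m_{\lambda_2})/|a-b|$ by \eqref{eq:explicit-base-areas}, with period ratio $\tau_2$. The Kronecker limit formula gives
\[
 \Det\Delta_{E_2}=A_2\,\Im\tau_2\,|\eta(\tau_2)|^4 .
\]
To remove the modular factor, I would pull back by the substitution $u=(1-b)z/(z-b)$, which identifies $E_2$ with the Legendre model $E_{\lambda_2}$ up to the scaling recorded in \eqref{eq:base-legendre-pullbacks}. The classical identities are
\[
 \lambda=\theta_2^4/\theta_3^4,\qquad 1-\lambda=\theta_4^4/\theta_3^4,\qquad 2\eta^3=\theta_2\theta_3\theta_4,\qquad \theta_3(\tau)^2=\tfrac{2}{\pi}I_0 ,
\]
up to the normalization dictated by the branch $\lambda>1$, where one should use $\lambda^{-1}$ and the half-period $I_0=\pi\lambda^{-1/2}F(\lambda^{-1})$. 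They give
\[
 |\eta(\tau)|^{24}=2^{-8}\,|\lambda(1-\lambda)|^{2}\,|\theta_3|^{24}\,|\lambda|^{-?}
\]
in the appropriate normalization. Hence $|\eta|^4$ equals $2^{-4/3}|\lambda(\lambda-1)|^{1/3}|I_0|^2/\pi^2$ times a power of $|\lambda|$ that the branch choice should make trivial. Combined with $\Im\tau_2\,|I_0|^2=\Im(\overline{I_0}I_1)=\tfrac12\Area(\PP^1,m_{\lambda_2})$ and the value of $A_2$, this should produce \eqref{eq:e2-determinant}. The exponent bookkeeping, in particular $2^{-4/3}$ and the absence of an extra $|\lambda|$ factor, needs care. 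Monodromy invariance of both sides then extends the identity from $\lambda_2>1$ to all $\lambda_2$.

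\emph{The singular torus $(E_1,m_1)$.} Here $m_1=|z|^{-1}|dz/Y|^2$ is conformal to the smooth flat metric $|dz/Y|^2$, with conformal factor $|z|^{-1}$. The factor $|z|$ vanishes to first order at each of the two points over $z=0$ (local parameter $t$ with $z\sim t$), which gives cone angle $\pi$. At the point over $\infty$ it has a pole of order $2$ in the local parameter, which gives order $1$. I would first compute $\Det\Delta$ for $(E_1,|dz/Y|^2)$ by the same Kronecker and theta argument with $\lambda_1$. Then I would apply \cite[Corollary~1.3]{KalvinJFA}, the anomaly formula between a smooth metric and a conical metric $e^{2\varphi}|dz/Y|^2$ with divisor $-\tfrac12 P_1-\tfrac12 P_2+P_\infty$.

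The anomaly formula contributes several terms:
\begin{itemize}
 \item the area ratio, with $\Area(E_1,m_1)$ equal to $2\Area(S,m_S^{(1,0)})=2\Area(\PP^1,m_{\lambda_2})/|a-b|$, which is why both areas appear;
 \item the universal cone constants, which for orders $-1/2,-1/2,1$ yield the $e^{3\zeta_R'(-1)}$ and powers of $2$;
 \item the regularized values of the smooth-metric Green function and of the conformal factor at the cone points.
 \end{itemize}
The last terms require the leading coefficients of $|z|$ in distinguished local coordinates of $|dz/Y|^2$. Near $z=0$ one has $\zeta=\int dz/Y\approx z/\sqrt{-ab}$, so these coefficients involve $|ab|$; this should give the factor $|ab|^{1/12}$. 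It also requires the Green function of the flat torus at pairs of cone points, evaluated via $\theta_1$ at half-periods (the two points over $0$ differ by a $2$-torsion point) and at quarter-type points relative to $\infty$.

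\emph{Main obstacle.} The hardest step is turning these Green-function and theta values at torsion points into algebraic expressions in $\lambda_1,\lambda_2,a,b$. In particular, a factor $|I_0(\lambda_1)|^2\Im\tau_1$ must cancel, leaving only $|\lambda_1(\lambda_1-1)|^{1/3}$ times $\Area(\PP^1,m_{\lambda_1})$, while the remaining pieces produce $\Area(\PP^1,m_{\lambda_2})$ and $|a-b|^{-1}$. I would evaluate the Green-function terms using $\theta_j(0)$ and the quotient identities $\theta_1(\tfrac12)=\theta_2$, and so on. The resulting constants would then be checked against a symmetric special value, for instance $a=-1$, $b=\ldots$, or against consistency with Lemma~\ref{Lmm}.
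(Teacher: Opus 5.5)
Your treatment of $(E_2,m_2)$ follows the paper's route: the Kronecker limit formula, the theta identities at $t=\lambda^{-1}$, and extension by monodromy invariance of $\Im\tau\,|\etaD(\tau)|^4$. Your target $|\etaD(\tau)|^4=2^{-4/3}|\lambda(\lambda-1)|^{1/3}|I_0|^2/\pi^2$ is exactly right, but the intermediate formulas you display are not. In the present normalization $\vartheta_3^2=F(\lambda^{-1})=\sqrt\lambda\,I_0/\pi$, not $\tfrac2\pi I_0$. Also $\etaD^{12}=2^{-4}t(1-t)\vartheta_3^{12}$ with $t=\lambda^{-1}$, so $|\etaD|^4=2^{-4/3}|t(1-t)|^{1/3}|F(t)|^2$. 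The powers of $|\lambda|$ then combine as $|\lambda|^{-2/3}\cdot|\lambda|=|\lambda|^{1/3}$, and no factor is left over.

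\textbf{The genuine gap is in $(E_1,m_1)$.} You never determine the anomaly constant. You correctly locate the $|ab|$ source and the role of $C(\beta)$, but then defer the result to a Green-function evaluation that is unnecessary and rests on a false premise. When both metrics are flat off their cone points, the curvature integrals in \cite[Theorem~1.1]{KalvinJFA} vanish and only local terms remain:
\[
\log\frac{\Det\Delta_{E_1}/\Area(E_1,m_1)}{\Det\Delta_{|dz/Y|^2}/\Area(E_1,|dz/Y|^2)}
=\frac16\sum_P\beta_P\Bigl(\frac{\phi_P(0)}{\beta_P+1}-\psi_P(0)\Bigr)-\sum_P C(\beta_P).
\]
Here $\phi_P(0)$ and $\psi_P(0)$ are the regular parts of $m_1$ and $|dz/Y|^2$ in a common local coordinate. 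Even in a Dirichlet-energy form, $\log|z|$ is harmonic off its divisor, so integration by parts leaves only these local terms. Since the conformal factor is the modulus of the explicit function $z$, they can be read off directly:
\begin{itemize}
\item at $P_\pm$, take $x=z$ and use $Y(0)^2=-ab$, so $\beta=-\frac12$ and $\phi=\psi=-\frac12\log|ab|$;
\item at $P_\infty$, take $x=z^{-1/2}$ and use $dz/Y=-2(1+O(x^2))\,dx$, so $\beta=1$ and $\phi=\psi=\log2$.
\end{itemize}
With $C(-\frac12)$ and $C(1)$ this gives $\frac1{12}\log|ab|+\frac13\log2+3\zeta_R'(-1)$. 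Hence $\Det\Delta_{E_1}=2^{1/3}e^{3\zeta_R'(-1)}|ab|^{1/12}\Area(E_1,m_1)\Im\tau_1|\etaD(\tau_1)|^4$. The Kronecker-factor identity at $\lambda_1$, together with $\Area(E_1,m_1)=2\Area(\PP^1,m_{\lambda_2})/|a-b|$, then finishes the proof.

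Your planned detour would not work as described either. With origin $P_\infty$ one has $P_-=-P_+$, so $P_+-P_-=2P_+$. This is not $2$-torsion for general $(a,b)$; for instance $a=2$, $b=3$ fails. So there are no theta values at torsion points to exploit.

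Also, $|I_0(\lambda_1)|^2\Im\tau_1$ does not cancel: it is half of $\Area(\PP^1,m_{\lambda_1})$ and survives. What drops out is $\Area(E_1,|dz/Y|^2)$, because the anomaly formula relates area-normalized determinants. Checking against a special value does not replace computing the constant.
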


\begin{proof}
We first  assume that  $\lambda>1$ and set
$t=\lambda^{-1}\in(0,1)$.  With the same
$F(t)={}_2F_1(1/2,1/2;1;t)$ as in~\eqref{eq:base-area-function}, the
period conventions fixed above yield
\[
 \tau(\lambda)=\frac{I_1(\lambda)}{I_0(\lambda)}
 =i\frac{F(1-t)}{F(t)}.
\]
The classical identities
\[
 t=\frac{\vartheta_2(\tau)^4}{\vartheta_3(\tau)^4},
 \qquad
 1-t=\frac{\vartheta_4(\tau)^4}{\vartheta_3(\tau)^4},
 \qquad
 F(t)=\vartheta_3(\tau)^2,
 \qquad
 2\etaD(\tau)^3
 =\vartheta_2(\tau)\vartheta_3(\tau)\vartheta_4(\tau),
\]
 where $\etaD$ is the Dedekind eta function, imply
\[
 |\etaD(\tau(\lambda))|^4
 =2^{-4/3}|t(1-t)|^{1/3}|F(t)|^2.
\]
Since
\[
 I_0(\lambda)=\pi\lambda^{-1/2}F(t),
 \qquad
 \Area(\PP^1,m_\lambda)
 =2|I_0(\lambda)|^2\Im\tau(\lambda),
\]
we obtain
\begin{equation}\label{eq:kronecker-lambda-factor}
 \Im\tau(\lambda)|\etaD(\tau(\lambda))|^4
 =\frac{|\lambda(\lambda-1)|^{1/3}}{2^{7/3}\pi^2}
 \Area(\PP^1,m_\lambda).
\end{equation}
This establishes~\eqref{eq:kronecker-lambda-factor} for $\lambda>1$.
 Since
$\Im\tau\,|\etaD(\tau)|^4$ is invariant under symplectic changes of
the period basis, \eqref{eq:kronecker-lambda-factor} extends as a
single-valued function to every $\lambda\in\CC\setminus\{0,1\}$.

The metric $m_2=|dz/W|^2$ is smooth and flat.  The Kronecker limit
formula \cite{OsgoodPhillipsSarnak} yields
\[
 \Det\Delta_{E_2}
 =\Area(E_2,m_2)\Im\tau_2\,|\etaD(\tau_2)|^4.
\]
This together with \eqref{eq:explicit-base-areas} and
\eqref{eq:kronecker-lambda-factor} proves
\eqref{eq:e2-determinant}.

To evaluate the second determinant, we use the singular anomaly formula~\cite[Theorem 1.1]{KalvinJFA} for the metric $m_1=|z|^{-1} \left|{dz}/{Y}\right|^2$ and  the smooth flat
metric $\left|{dz}/{Y}\right|^2$.

Let $P_+$ and $P_-$ be the two points over $z=0$, and let $P_\infty$
be the point at infinity.  In a local coordinate $x$ centred at one of
these points, write
\[
\begin{aligned}
 m_1 & =|x|^{2\beta_P}e^{2\phi}|dx|^2  =|x|^{2\beta_P}e^{2\phi_P(0)+o(1)}|dx|^2, \quad |x|\to 0,
 \\
 \left|{dz}/{Y}\right|^2&=e^{2\psi}|dx|^2
 =e^{2\psi_P(0)+o(1)}|dx|^2,
 \qquad |x|\to 0.
 \end{aligned}
\]
Both metrics are flat wherever smooth, so the curvature integrals in
the singular anomaly formula~\cite[Theorem~1.1]{KalvinJFA} vanish.  As a result, the anomaly formula simplifies to 
\begin{equation}\label{eq:e1-anomaly-specialized}
 \begin{aligned}
 \log \frac{\Det\Delta_{E_1}/\Area(E_1,m_1)}{\Det\Delta_{\left|{dz}/{Y}\right|^2}/\Area(E_1,\left|{dz}/{Y}\right|^2)}
 =  & \frac16\sum_{P\in\{P_+,P_-,P_\infty\}}
 \beta_P\left(\frac{\phi_P(0)}{\beta_P+1}-\psi_P(0)\right)
 \\ & -\sum_{P\in\{P_+,P_-,P_\infty\}}C(\beta_P).
 \end{aligned}
\end{equation}

Since the projection $z:E_1\to\PP^1$ is unramified at $P_\pm$, we may
take $x=z$ as a local coordinate centred at either point; the identity
$Y^2(0)=-ab$ then implies
\[
 \beta_P=-\frac12,
 \qquad
 \phi_P(0)=\psi_P(0)=-\frac12\log|ab|.
\]

In a vicinity of $P_\infty$ we use the local coordinate $x=z^{-1/2}$.  Then
\[
 \frac{dz}{Y}=-2(1+O(x^2))\,dx,
 \qquad
 |z|^{-1}=|x|^2,
\]
and hence
\[
 \beta_{P_\infty}=1,
 \qquad
 \phi_{P_\infty}(0)=\psi_{P_\infty}(0)=\log2.
\]
Consequently, the finite points contribute
$\frac1{12}\log|ab|$ to the first sum in
\eqref{eq:e1-anomaly-specialized}, while the point at infinity
contributes $-\frac1{12}\log2$.  

The last sum in
\eqref{eq:e1-anomaly-specialized} is the contribution to the anomaly
that depends only on the orders of the conical singularities.  Its
required values are given explicitly in
\cite[p.~6, after Eq.~(1.5)]{KalvinJFA}:
\begin{equation}\label{eq:C-two}
 C\left(-\frac12\right)
 =-\zeta_R'(-1)-\frac16\log2+\frac1{24},
 \qquad
 C(1)=-\zeta_R'(-1)-\frac1{12}\log2-\frac1{12}.
\end{equation}
In summary, the anomaly formula \eqref{eq:e1-anomaly-specialized} reduces to
\[
  \log \frac{\Det\Delta_{E_1}/\Area(E_1,m_1)}{\Det\Delta_{\left|{dz}/{Y}\right|^2}/\Area(E_1,\left|{dz}/{Y}\right|^2)}
 = \frac1{12}\log|ab|+\frac13\log2+3\zeta_R'(-1).
\]
Finally, for the smooth flat metric $\left|{dz}/{Y}\right|^2$, the
Kronecker limit formula reads
\[
 \Det\Delta_{\left|{dz}/{Y}\right|^2}
 =\Area(E_1,\left|{dz}/{Y}\right|^2)\Im\tau_1|\etaD(\tau_1)|^4.
\]
Consequently,
\begin{equation}\label{eq:e1-modular-form}
 \Det\Delta_{E_1}
 =2^{1/3}e^{3\zeta_R'(-1)}|ab|^{1/12}
 \Area(E_1,m_1)\Im\tau_1|\etaD(\tau_1)|^4.
\end{equation}

Both quotient maps $X_{a,b}\to E_j$ have degree two and the quotient
metrics pull back to $|\omega_{(1,0)}|^2$.  Hence
\[
 \Area(E_1,m_1)=\Area(E_2,m_2)
 =\frac{2}{|a-b|}\Area(\PP^1,m_{\lambda_2}).
\]
Substitution of this identity into~\eqref{eq:e1-modular-form} and
\eqref{eq:kronecker-lambda-factor} with $\lambda=\lambda_1$ proves
\eqref{eq:e1-anomaly}.
\end{proof}


\section[The spectral determinant on the genus-two surface]{The spectral determinant of \((X_{a,b},|\omega_{\boldsymbol c}|^2)\)}
\label{sec:main-determinant}

 Substituting
the four quotient determinants evaluated above into the Artin--Sunada
identity~\eqref{eq:v4-flat-reduction} we find the determinant for the
reference metric $|\omega_{(1,0)}|^2$.  As we show in the proof of Theorem~\ref{thm:main} below, the singular anomaly formula
then determines the determinant for any metric
$|\omega_{\boldsymbol c}|^2$.

\begin{proof}[Proof of Theorem~\ref{thm:main}]

For the reference differential $\omega_{(1,0)}=2dx/y$, we first show
that
\begin{equation}\label{eq:omega-zero-formula}
 \frac{\Det\Delta_{|\omega_{(1,0)}|^2}}
 {\Area(X_{a,b},|\omega_{(1,0)}|^2)}
 =\frac{e^{6\zeta_R'(-1)}}{8\pi^3}|ab|^{-1/6}
 \prod_{j=1}^2
 \left(
 |\lambda_j(\lambda_j-1)|^{1/3}
 \Area(\PP^1,m_{\lambda_j})
 \right).
\end{equation}
Here $\lambda_j$ are defined in~\eqref{eq:lambda-coordinates}, and the
areas $\Area(\PP^1,m_{\lambda_j})$ are given explicitly
by~\eqref{eq:base-area-function}.

Insert the expressions obtained for the  four quotient determinants in \eqref{eq:base-sphere-determinant},
\eqref{eq:hyperelliptic-sphere-determinant},
\eqref{eq:e2-determinant}, and \eqref{eq:e1-anomaly}  into the 
Artin--Sunada identity~\eqref{eq:v4-flat-reduction}.  The algebraic position factors cancel
according to
\begin{equation}\label{eq:position-cancellation}
 \frac{
 |(1-a)(1-b)(a-b)|^{1/3}|ab|^{1/6}}
 {|ab(1-a)(1-b)(a-b)|^{1/3}}
 =|ab|^{-1/6}.
\end{equation}
Using the area relations in Lemmas~\ref{Lmm}
and~\ref{lem:elliptic-determinants}, the remaining factors combine to
produce
\begin{equation}\label{eq:aux}
 \begin{aligned}
 \Det\Delta_{|\omega_{(1,0)}|^2}
 ={}&\frac{e^{6\zeta_R'(-1)}}{2\pi^3|a-b|}
 |ab|^{-1/6}\Area(\PP^1,m_{\lambda_2})\\
 &\times\prod_{j=1}^2
 \left(
 |\lambda_j(\lambda_j-1)|^{1/3}
 \Area(\PP^1,m_{\lambda_j})
 \right).
 \end{aligned}
\end{equation}
Finally, \eqref{eq:general-area} with $(c_0,c_1)=(1,0)$ specializes to
\[
 \Area(X_{a,b},|\omega_{(1,0)}|^2)
 =\frac4{|a-b|}\Area(\PP^1,m_{\lambda_2}).
\]
This together with~\eqref{eq:aux}  implies~\eqref{eq:omega-zero-formula}.

Now we pass from $\omega_{(1,0)}$ to an arbitrary holomorphic
one-form with two simple zeros.    If $\omega$ and
$\widetilde\omega$ have simple zero divisors $\sum 1\cdot P_k$ and
$\sum1\cdot \widetilde P_k$ with disjoint supports, then the anomaly formula from~\cite[Corollary~1.3]{KalvinJFA} simplifies to
\begin{equation}\label{eq:abelian-comparison}
 \frac{
  \Det\Delta_{|\omega|^2}/\Area(X,|\omega|^2)}{
  \Det\Delta_{|\widetilde\omega|^2}/
  \Area(X,|\widetilde\omega|^2)}
 =
 \left|
 \frac{
  \prod_k\operatorname*{res}_{\widetilde P_k}
       (\omega^2/\widetilde\omega)
 }{
  \prod_k\operatorname*{res}_{P_k}
       (\widetilde\omega^2/\omega)
 }
 \right|^{1/12}.
\end{equation}

Set
\[
 f_{a,b}(x)=(x^2-1)(x^2-a)(x^2-b).
\]
  Let $Q_+$ and
$Q_-$ be the two points over $x=-c_0/c_1$, and denote the two points at
infinity by $\infty_+$ and $\infty_-$.  Direct calculation gives
\begin{equation}\label{eq:residues-infinity}
 \left|\operatorname*{res}_{\infty_\pm}
 \frac{\omega_{\boldsymbol c}^{2}}{\omega_{(1,0)}}\right|
 =2|c_1|^2,
\end{equation}
and
\begin{equation}\label{eq:residues-finite}
 \left|\operatorname*{res}_{Q_\pm}
 \frac{\omega_{(1,0)}^{2}}{\omega_{\boldsymbol c}}\right|
 =\frac2{|c_1|\,|f_{a,b}(-c_0/c_1)|^{1/2}},
\end{equation}
where we first assume that $c_1\ne0$. 
Therefore
\[
 \left|
 \frac{
  \prod_{\nu=\pm}\operatorname*{res}_{\infty_\nu}
  (\omega_{\boldsymbol c}^{2}/\omega_{(1,0)})}
 {
  \prod_{\nu=\pm}\operatorname*{res}_{Q_\nu}
  (\omega_{(1,0)}^{2}/\omega_{\boldsymbol c})}
 \right|
 =|c_1|^6|f_{a,b}(-c_0/c_1)|
 =|\cP_{a,b}(c_0,c_1)|.
\]
As a result, the equality \eqref{eq:abelian-comparison} implies
\begin{equation}\label{eq:general-differential-ratio}
 \frac{\Det\Delta_{|\omega_{\boldsymbol c}|^2}}
 {\Area(X_{a,b},|\omega_{\boldsymbol c}|^2)}
 =
 \frac{\Det\Delta_{|\omega_{(1,0)}|^2}}
 {\Area(X_{a,b},|\omega_{(1,0)}|^2)}
 |\cP_{a,b}(c_0,c_1)|^{1/12}.
\end{equation}
For $c_1=0$, the same identity follows directly from scaling.
Combining \eqref{eq:general-differential-ratio} with
\eqref{eq:omega-zero-formula} we complete the proof of  Theorem~\ref{thm:main}.
\end{proof}

\begin{remark}[Scaling check]\label{rem:scaling}
By~\cite[Corollary~1.2]{KalvinJFA},
$\zeta_{|\omega_{\boldsymbol c}|^2}(0)=-5/4$.  Hence the standard
rescaling relation~\cite[Eq.~(1.6)]{KalvinJFA} reads
\[
 \Det\Delta_{|\mu\omega_{\boldsymbol c}|^2}
 =|\mu|^{5/2}\Det\Delta_{|\omega_{\boldsymbol c}|^2},
 \qquad
 \Area(X_{a,b},|\mu\omega_{\boldsymbol c}|^2)
 =|\mu|^2\Area(X_{a,b},|\omega_{\boldsymbol c}|^2),
\]
where 
$\mu\in\CC\setminus\{0\}$ is a scaling coefficient.
Therefore
\[
 \frac{\Det\Delta_{|\mu\omega_{\boldsymbol c}|^2}}
      {\Area(X_{a,b},|\mu\omega_{\boldsymbol c}|^2)}
 =|\mu|^{1/2}
 \frac{\Det\Delta_{|\omega_{\boldsymbol c}|^2}}
      {\Area(X_{a,b},|\omega_{\boldsymbol c}|^2)}.
\]
Since $\mu\omega_{\boldsymbol c}=\omega_{\mu\boldsymbol c}$ and
$\cP_{a,b}$ is homogeneous of degree six, the right-hand side of
\eqref{eq:main-formula} has the same scaling.
\end{remark}

\begin{remark}[Independence of the normalized coordinate]
Although the closed epxplicit formula~\eqref{eq:main-formula} for 
the spectral determinant  of $(X_{a,b},|\omega_{\boldsymbol c}|^2)$  is written in the normalized
coordinate $x$, its right-hand side is independent of this choice.

Indeed, up to changes of signs of $x$ and $y$, every change of normalized
coordinate preserving the chosen bielliptic involution is generated by
$a\leftrightarrow b$ and
\[
 (a,b;c_0,c_1)
 \longmapsto
 \left(a^{-1},\frac ba;\frac{c_0}{a},\frac{c_1}{\sqrt a}\right),
\]
\[
 (a,b;c_0,c_1)
 \longmapsto
 \left(a^{-1},b^{-1};
 -\frac{c_1}{\sqrt{-ab}},-\frac{c_0}{\sqrt{-ab}}\right).
\]
The latter transformations correspond respectively to
$(x,y)=(\sqrt a\,\widetilde x,a^{3/2}\widetilde y)$ and
$(x,y)=(\widetilde x^{-1},\sqrt{-ab}\,\widetilde x^{-3}\widetilde y)$.
Directly,
\[
 \cP_{a^{-1},\,b/a}
 \left(\frac{c_0}{a},\frac{c_1}{\sqrt a}\right)
 =a^{-6}\cP_{a,b}(c_0,c_1),
\]
and
\[
 \cP_{a^{-1},\,b^{-1}}
 \left(-\frac{c_1}{\sqrt{-ab}},
       -\frac{c_0}{\sqrt{-ab}}\right)
 =(ab)^{-4}\cP_{a,b}(c_0,c_1).
\]
Hence
\[
 |ab|^{-1/6}|\cP_{a,b}(c_0,c_1)|^{1/12}
\]
is unchanged.  By~\eqref{eq:kronecker-lambda-factor}, the remaining
product in~\eqref{eq:main-formula} is a universal constant times the
product of the two Kronecker factors.  The first coordinate change
induces isomorphisms of the two elliptic quotients separately, whereas
inversion interchanges them.  Thus this product is unchanged as well;
all choices of signs and square roots disappear after taking absolute
values.  Hence
\eqref{eq:main-formula} is independent of the normalized coordinate.
\end{remark}

\section{Degenerations}
\label{sec:degenerations}

We now apply Theorem~\ref{thm:main} at the boundary of the
bielliptic locus. 
Algebraically, the curve acquires one or two nodes and is described by
its normalization.  Metrically, the limit depends on the behavior of
the differential at the preimages of the nodes: a holomorphic limit
leaves these points at finite distance, whereas simple poles with
opposite residues produce cylindrical ends.

Up to the changes of normalized coordinate discussed above, the
boundary regimes considered below are the separating degeneration,
the two metric realizations of the one-node nonseparating degeneration,
and the simultaneous two-node degeneration.

\subsection{The separating degeneration}
\label{sec:separating-degeneration}

Fix $A,B\in\CC\setminus\{0\}$, with $A\ne B$, and let
\begin{equation}\label{eq:abc}
 a=1+\varepsilon A,
 \qquad
 b=1+\varepsilon B.
\end{equation}
As $\varepsilon\to 0$ the three branch points near $1$ collide, and so do their images  under  the involution \(x\mapsto-x\)  near
$-1$.  After taking the quotient by $x\mapsto-x$ and introducing
\begin{equation}\label{eq:degen-coord}
 z=x^2,
 \qquad
 u=\frac{z-1}{\varepsilon},
 \qquad
 Y=\frac{y}{\varepsilon^{3/2}},
\end{equation}
one obtains the fixed elliptic curve
\[
 T:\quad Y^2=u(u-A)(u-B).
\]
The surface $X_{a,b}$ is a double cover of
$T$, obtained by cross-gluing two copies of $T$ along a slit whose
endpoints coalesce as $\varepsilon\to0$.  In the limit the two copies
meet at one separating node.  We consider two metric descriptions of
this degeneration: in the first a slit shrinks, while in the second an
auxiliary flat cylinder stretches.


\subsubsection{The shrinking-slit metric}

Fix $c>0$, choose a branch of $\varepsilon^{1/2}$ along the
degeneration path, and set $c_1=\varepsilon^{1/2}c$.  On $X_{a,b}$,
consider the differential $\omega_{(0,c_1)}=2c_1x\,dx/y$.  Under the
preceding change of variables it converges to
\begin{equation}\label{eq:Talpha}
 \alpha=c\frac{du}{Y}
\end{equation}
on either copy of $T$.  The same slit is made in both copies.  Let
$s_\varepsilon$ denote its length in the flat metric $|\alpha|^2$.

\begin{figure}[H]
\centering
\begin{tikzpicture}[line cap=round,line join=round]
 \node[inner sep=0] at (-3.45,0)
  {\includegraphics[width=6.15cm]{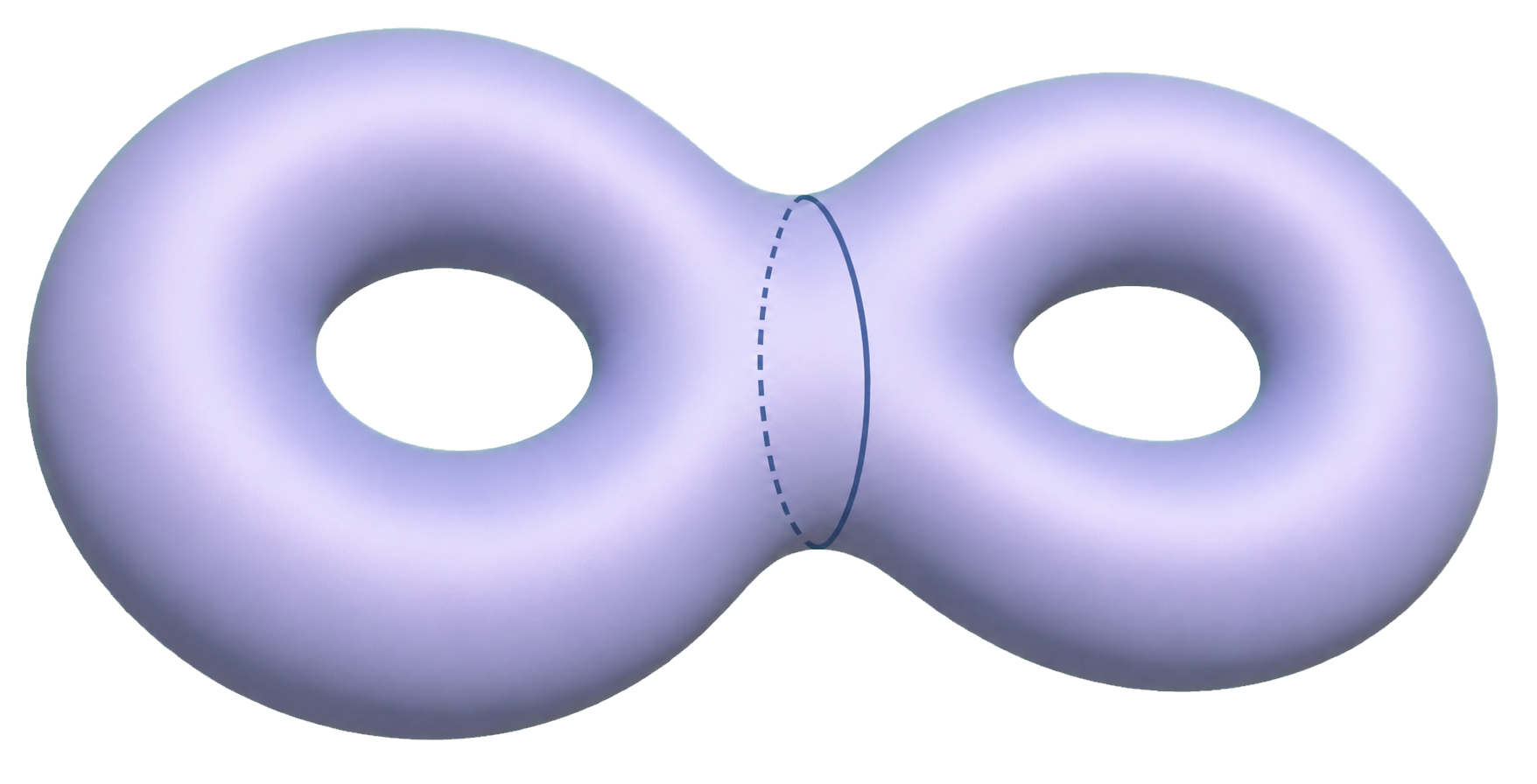}};
 \node[inner sep=0] at (3.45,0)
  {\includegraphics[width=6.05cm]{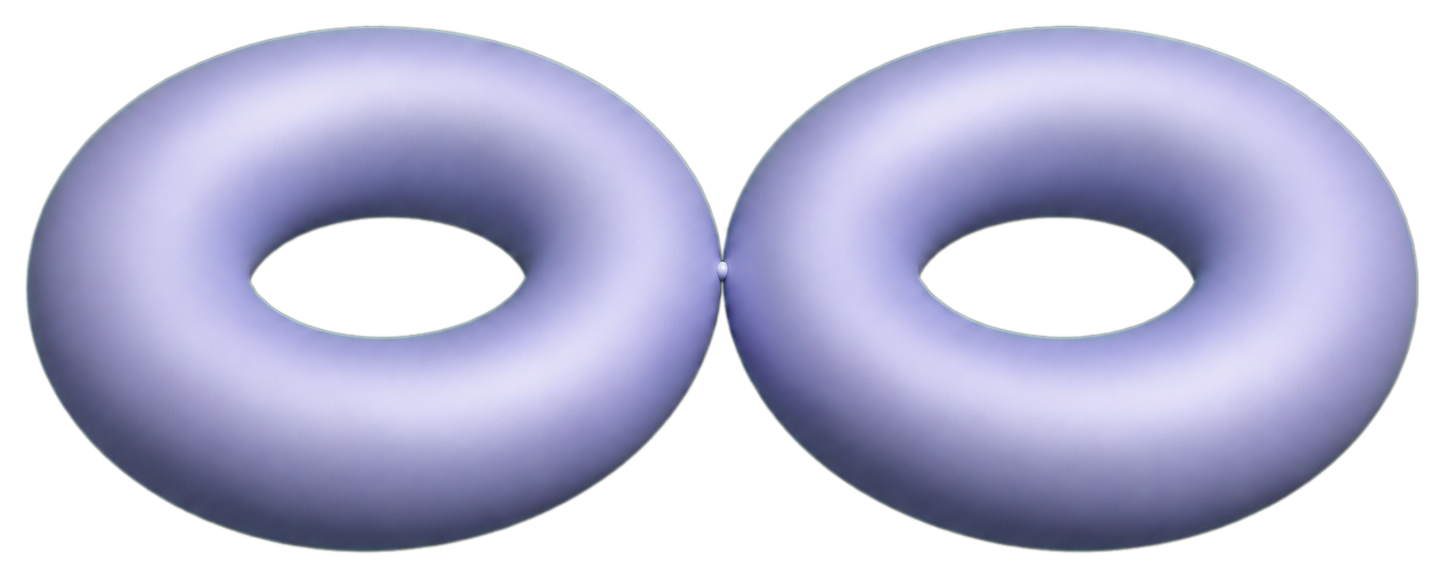}};
 \draw[-{Latex[length=2.1mm]},thick] (-.38,0)--(.38,0);
 \node[below] at (-3.45,-1.75)
  {$\bigl(X_{a,b},|\omega_{(0,c_1)}|^2\bigr)$};
 \node[below] at (3.45,-1.75)
  {$\bigl(T,|\alpha|^2\bigr)\cup\bigl(T,|\alpha|^2\bigr)$};
\end{tikzpicture}
\caption{Separating limit: two flat tori joined at a node.}
\label{fig:separating-degeneration}
\end{figure}

\begin{proposition}\label{prop:separating-asymptotics}
Let $A,B\in\CC\setminus\{0\}$, $A\ne B$, and $c>0$.  Choose a branch of
$\varepsilon^{1/2}$, define $a$ and $b$ as in~\eqref{eq:abc}, and set
$c_1=\varepsilon^{1/2}c$.  Let $T$ be the elliptic curve above and let
$\alpha$ be the differential in~\eqref{eq:Talpha}.
Let $s_\varepsilon$ stand for the length in the metric $|\alpha|^2$ of the shrinking
slit along which two copies of $T$ are cross-glued to form $X_{a,b}$.
Then, as $\varepsilon\to0$, we have
\[
 s_\varepsilon
 =4c\sqrt{|\varepsilon|}\,(1+o(1)),
\]
and
\begin{equation}\label{eq:separating-determinant-asymptotic}
 \frac{\Det\Delta_{|\omega_{(0,c_1)}|^2}}
 {\Area(X_{a,b},|\omega_{(0,c_1)}|^2)}
 =2^{2/3}\pi e^{6\zeta_R'(-1)}
 s_\varepsilon^{1/2}
 \left(
 \frac{\Det\Delta_{|\alpha|^2}}
 {\Area(T,|\alpha|^2)}
 \right)^2
 \bigl(1+o(1)\bigr).
\end{equation}
\end{proposition}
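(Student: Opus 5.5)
The plan is to specialize Theorem~\ref{thm:main} to $c_0=0$, $c_1=\varepsilon^{1/2}c$, and $a,b$ as in~\eqref{eq:abc}. I would then expand every factor on the right-hand side of~\eqref{eq:main-formula} as $\varepsilon\to0$, and finally rewrite the limit in terms of the torus $(T,|\alpha|^2)$ by means of the Kronecker limit formula. There is no new analysis of the degeneration itself. All the work is bookkeeping of explicit algebraic factors and of the Legendre-type identification between the base spheres and $T$.

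\emph{Algebraic factors.} Since $c_0=0$, we have $\cP_{a,b}(0,c_1)=-ab\,c_1^6$. Hence
\[
|ab|^{-1/6}|\cP_{a,b}(0,c_1)|^{1/12}=|ab|^{-1/12}|c_1|^{1/2}=c^{1/2}|\varepsilon|^{1/4}(1+o(1)).
\]
Next, $\lambda_1=(b-1)/(a-1)=B/A$ is independent of $\varepsilon$. Also
\[
\lambda_2=a(1-b)/(a-b)=(1+\varepsilon A)B/(B-A)\to B/(B-A),
\]
and this limit lies in $\CC\setminus\{0,1\}$ because $A\ne B$ and $A,B\neq0$. By continuity of~\eqref{eq:base-area-function}, both factors $|\lambda_j(\lambda_j-1)|^{1/3}\Area(\PP^1,m_{\lambda_j})$ converge to finite nonzero limits.

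\emph{Identification with $T$.} The substitution $u=Av$ gives $Y^2=A^3v(v-1)(v-B/A)$. So $T$ is the Legendre curve $E_{\lambda_1}$, and $|\alpha|^2=|c|^2|A|^{-1}|w_{\lambda_1}|^2$. Similarly, the Möbius map sending $\{0,A,B,\infty\}$ to $\{0,1,B/(B-A),\infty\}$ shows that $\lambda_2^{(0)}:=B/(B-A)$ lies in the same $S_3$-orbit as $B/A$. Therefore $E_{\lambda_2^{(0)}}\cong T$, and the Kronecker factor $\Im\tau\,|\etaD(\tau)|^4$ takes the same value $K_T$ for both.

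By~\eqref{eq:kronecker-lambda-factor}, each product $|\lambda_j(\lambda_j-1)|^{1/3}\Area(\PP^1,m_{\lambda_j})$ equals $2^{7/3}\pi^2K_T$ in the limit. The Kronecker limit formula gives $\Det\Delta_{|\alpha|^2}/\Area(T,|\alpha|^2)=K_T$. So the right-hand side of~\eqref{eq:main-formula} becomes
\[
\frac{e^{6\zeta_R'(-1)}}{8\pi^3}\,c^{1/2}|\varepsilon|^{1/4}\,2^{14/3}\pi^4K_T^2(1+o(1)).
\]

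\emph{Slit length.} The cover $X_{a,b}\to E_1\cong T$ is the quotient by $\sigma$. It is branched at the two points $x=0$, that is $z=0$, which corresponds to $u=-1/\varepsilon$. These two branch points lie near the point at infinity of $T$. In the local coordinate $t=u^{-1/2}$ we have $du/Y=-2(1+O(t^2))\,dt$, and the two branch points sit at $t=\pm(-\varepsilon)^{1/2}$. The slit joins them, so
\[
s_\varepsilon=2c\cdot2|\varepsilon|^{1/2}(1+o(1))=4c\sqrt{|\varepsilon|}\,(1+o(1)).
\]
Hence $c^{1/2}|\varepsilon|^{1/4}=s_\varepsilon^{1/2}/2$ up to $1+o(1)$. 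Substituting, the constant becomes $2^{14/3}\pi/16=2^{2/3}\pi$, which is~\eqref{eq:separating-determinant-asymptotic}.

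\emph{Main obstacle.} The step needing most care is the identification of the two limiting Kronecker factors with that of $T$ when $\lambda_2$ tends to an $S_3$-image of $\lambda_1$. This relies on $\Im\tau|\etaD|^4$ being invariant under the anharmonic group, equivalently on $T$ and $E_{\lambda_2^{(0)}}$ being isomorphic. It also requires checking that the normalizations in~\eqref{eq:kronecker-lambda-factor} are single-valued, so that no stray power of $|A|$ or $|B-A|$ survives. Once that is done, the constants are routine.
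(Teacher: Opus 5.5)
Your proposal is correct and follows the paper's proof essentially step for step. You specialize \eqref{eq:main-formula} using $\cP_{a,b}(0,c_1)=-abc_1^6$, and you use $\lambda_1=B/A$ together with $\lambda_2\to\lambda_1/(\lambda_1-1)$, the isomorphism of the limiting Legendre curves and \eqref{eq:kronecker-lambda-factor} to obtain the factor $2^{14/3}\pi^4K_T^2$. You compute the slit length in the coordinate $t=u^{-1/2}$, and your constant bookkeeping ($c^{1/2}|\varepsilon|^{1/4}=s_\varepsilon^{1/2}/2$, which yields $2^{2/3}\pi$) checks out.
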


\begin{proof}
For the quotient map $X_{a,b}\to T$ we have
$x^2=1+\varepsilon u$.  Thus a point $(u,Y)\in T$ generally has two
preimages, corresponding to $x=\pm\sqrt{1+\varepsilon u}$.  The two
preimages merge at the two points $r_\varepsilon^\pm\in T$ over
$u=-\varepsilon^{-1}$; these are the branch points of the cover.  The
shrinking slit is the short $|\alpha|^2$-geodesic joining them.  As
$\varepsilon\to0$, both branch points approach the unique point at
infinity of the cubic model of $T$.  In the local coordinate
$t=u^{-1/2}$ they have coordinates $t=\pm i\sqrt\varepsilon$, and
$\alpha=-2c(1+O(t^2))\,dt$.  Hence
\[
 \left|\int_{r_\varepsilon^-}^{r_\varepsilon^+}\alpha\right|
 =4c\sqrt{|\varepsilon|}\,(1+o(1)),
\]
which is the stated formula for $s_\varepsilon$.

The two parameters in~\eqref{eq:lambda-coordinates} satisfy
\[
 \lambda_1=\frac BA,
 \qquad
 \lambda_2\longrightarrow\frac{B}{B-A}
 =\frac{\lambda_1}{\lambda_1-1}.
\]
The corresponding limiting Legendre curves are isomorphic.  Hence the
Kronecker limit formula and~\eqref{eq:kronecker-lambda-factor} imply
\[
 \prod_{j=1}^2
 \left(
 |\lambda_j(\lambda_j-1)|^{1/3}
 \Area(\PP^1,m_{\lambda_j})
 \right)
 \xrightarrow{\varepsilon\to0}
 2^{14/3}\pi^4
 \left(
 \frac{\Det\Delta_{|\alpha|^2}}
 {\Area(T,|\alpha|^2)}
 \right)^2.
\]
Finally, for  the binary sextic we obtain
\[
 \cP_{a,b}(0,c_1)
 =-abc^6\varepsilon^3.
\]
Substitution of these relations into the expression for $\Det\Delta_{|\omega_{(0,c_1)}|^2}$ in~\eqref{eq:main-formula}
proves the asymptotics~\eqref{eq:separating-determinant-asymptotic}.
\end{proof}

For simplicity, Proposition~\ref{prop:separating-asymptotics} is
stated for $c_0=0$.  More generally, if
$c_0=\varepsilon^{1/2}d$ and $c_1=\varepsilon^{1/2}c$, with
$c\pm d\ne0$, the limiting differentials are
$\alpha_\pm=(c\pm d)du/Y$ and the slit lengths are
$s_\varepsilon^\pm=4|c\pm d|\sqrt{|\varepsilon|}(1+o(1))$.
Thus $s_\varepsilon^{1/2}$ in
\eqref{eq:separating-determinant-asymptotic} is replaced by
$(s_\varepsilon^+s_\varepsilon^-)^{1/4}$; the area-normalized torus
determinants remain equal because $\alpha_+$ and $\alpha_-$ differ
only by constant factors.  If $c+d=0$ or $c-d=0$, the limiting
differential vanishes identically on one component; this is a
different metric degeneration and is not covered by the preceding
asymptotic formula.

\begin{remark}[Multiplicative constants]
\label{rem:jfa-normalization}
As a by-product, Proposition~\ref{prop:separating-asymptotics}
explicitly determines, for the first time, several multiplicative constants that had
previously been expressed only through unevaluated model determinants. Let
$\kappa_0$ be the constant in
\cite[Proposition~3.9]{KalvinJFA}; let $\widetilde\kappa_0$ and
$\kappa$ be the constants in the symmetric and general separating
formulas of \cite{KokotovDegeneration}; and let $\delta_g$ be the
coefficient in \cite[(1.7)]{KokotovDegeneration}, with the Bergman
tau-function normalized as in \cite[(1.8)]{KokotovDegeneration}.
Direct comparison of Proposition~\ref{prop:separating-asymptotics}
with \cite[Proposition~4]{KokotovDegeneration} yields
$\widetilde\kappa_0=2^{1/6}\pi e^{6\zeta_R'(-1)}$.  The relations in
\cite[(1.9), (1.11), and (3.28)]{KokotovDegeneration}, together with the
flat specialization of \cite[Proposition~3.9]{KalvinJFA}, determine the
remaining constants.  Thus
\begin{equation}\label{eq:absolute-normalization-constants}
 \begin{aligned}
 \widetilde\kappa_0
 &=2^{1/6}\pi e^{6\zeta_R'(-1)},
 &
 \kappa
 &=2^{2/3}\pi e^{6\zeta_R'(-1)},\\
 \kappa_0
 &=\left(\frac2\pi\right)^{1/3}e^{6\zeta_R'(-1)},
 &
 \delta_g
 &=2^{(g-5)/3}\pi^{-(g+3)/3}
 e^{6(g-1)\zeta_R'(-1)}.
\end{aligned}
\end{equation}

Consequently, on the principal stratum in every genus, the
Kokotov--Korotkin variational determinant formula
\cite{KokotovKorotkin} becomes absolutely normalized for the first
time, with no undetermined multiplicative constant left.
As a result, equation~\eqref{eq:absolute-normalization-constants}
explicitly evaluates the multiplicative constant $\kappa_0$ left undetermined in
\cite[Proposition~3.9]{KalvinJFA}.  The determinant formula
in that proposition is therefore now absolutely normalized for every
admissible conical metric on a compact Riemann surface of genus $g>1$,
including all constant-curvature conical metrics and the variable-curvature
dilation-analytic class of \cite[Definition~2.1]{KalvinJFA}.

\end{remark}

\subsubsection{The cylindrical metric and the Bismut--Bost asymptotics }
\label{sec:separating-bismut-bost-comparison}
Below, as a consequence of Proposition~\ref{prop:separating-asymptotics}, we
obtain an explicit form of the Bismut--Bost degeneration formula
\cite[Theorem~13.7]{BismutBost}.  M\"uller--M\"uller
\cite[(7.39)]{MullerMuller} identify its leading coefficient with a
relative determinant, which is therefore evaluated explicitly as well.

The metric $|\omega_{(0,c_1)}|^2$ realizes the separating degeneration
by a shrinking slit.  We now describe the same degeneration on the
same Riemann surface by a flat metric with a stretching cylinder.  Fix $\mu\notin\{0,A,B\}$ and let
\begin{equation}\label{eq:separating-cylinder-metric}
 \mathfrak q_{\varepsilon,\mu}
 =\frac{1}{4\pi^2}\bigl(x^2-1-\varepsilon\mu\bigr)\frac{dx^2}{y^2}.
\end{equation}
The quadratic differential $\mathfrak q_{\varepsilon,\mu}$ is
holomorphic and has four simple zeros over
$x^2=1+\varepsilon\mu$.  Thus the metric  $|\mathfrak q_{\varepsilon,\mu}|$ is flat and has
four conical singularities of angle $3\pi$.

In the coordinate $u$ in~\eqref{eq:degen-coord}, we have 
\[
 \mathfrak q_{\varepsilon,\mu}
 =\frac{1}{16\pi^2}\frac{u-\mu}{1+\varepsilon u}
   \frac{du^2}{u(u-A)(u-B)}
 \xrightarrow{\varepsilon\to0}
 \mathfrak q_\mu
 =\frac{1}{16\pi^2}
   \frac{(u-\mu)\,du^2}{u(u-A)(u-B)}
\]
on either limiting torus.  The quadratic differential $\mathfrak q_\mu$ has two
simple zeros and a double pole at the point $p$ over $u=\infty$.
Near $p$ there is a cylindrical coordinate $\zeta$ such that
\[
 \mathfrak q_\mu=\frac{1}{4\pi^2}
                  \frac{d\zeta^2}{\zeta^2}.
\]
Thus each limiting end is a product half-cylinder of circumference
one.  Indeed, writing
\[
 \zeta=e^{-2\pi(s+i\theta)},
 \qquad \theta\in\mathbb R/\mathbb Z,
\]
transforms its metric into \(ds^2+d\theta^2\).

Put $t=u^{-1/2}$ and $z_\pm=t(x\pm1)/2$.  Then
$z_+z_-=\varepsilon/4$.  Choosing the cylindrical coordinates on the
two limiting copies by $\zeta_\pm=z_\pm(1+O(z_\pm^2))$ implies
$\zeta_+\zeta_-=\varepsilon(1+o(1))/4$.
Hence the neck between $|\zeta_\pm|=1$  is a product cylinder  of modulus
\[
 L_\varepsilon
 :=\frac{H_\varepsilon}{\ell_\varepsilon}
 =\frac{1}{2\pi}\log\frac{4}{|\varepsilon|}+o(1),
 \qquad \varepsilon\to0,
\]
where $H_\varepsilon$ is the length and $\ell_\varepsilon=1+o(1)$ is the circumference of the cylindrical neck.

The Bismut--Bost asymptotic formula
\cite[Theorem~13.7]{BismutBost} takes the form
\begin{equation}\label{eq:Bismut--Bost}
 \Det\Delta_{|\mathfrak q_{\varepsilon,\mu}|}
 \sim C_{\mathrm{BB}}L_\varepsilon
 \exp\left(-\frac{\pi L_\varepsilon}{3}\right).
\end{equation}
The constant $C_{\mathrm{BB}}$ is not evaluated there.  Formula
\cite[(7.39)]{MullerMuller} identifies it spectrally:
\begin{equation}\label{eq:bismut-bost-relative-constant}
 C_{\mathrm{BB}}
 =2\left(\Det_{\mathrm{rel}}
 \bigl(\Delta_{|\mathfrak q_\mu|},\Delta_0\bigr)\right)^2.
\end{equation}
Here $\Det_{\mathrm{rel}}
\bigl(\Delta_{|\mathfrak q_\mu|},\Delta_0\bigr)$ is the relative
determinant, in the sense of M\"uller~\cite{MullerRelative},
 for the Friedrichs Laplacian  $\Delta_{|\mathfrak q_\mu|}$, on either limiting torus 
$(T\setminus\{p\},|\mathfrak q_\mu|)$ with a cylindrical end,  relative to  the Dirichlet
Laplacian  $\Delta_0$ on the unit half-cylinder $0<|\zeta|\leq1$.  The two limiting components are isometric, so their relative
determinants coincide; this accounts for the square in
\eqref{eq:bismut-bost-relative-constant}.   Although the cited results of Bismut--Bost and M\"uller--M\"uller are
stated for smooth metrics, they extend to the singular metric
$|\mathfrak q_{\varepsilon,\mu}|$, as shown in
Lemma~\ref{lem:conical-analytic-surgery} at the end of this subsection.


Formula \eqref{eq:bismut-bost-relative-constant} is spectral but not
explicit.  As an immediate consequence of Theorem~\ref{thm:main}, we
obtain an explicit expression for $C_{\mathrm{BB}}$, and hence for the
relative determinant in
\eqref{eq:bismut-bost-relative-constant}.  

Indeed, as
$c_1=\varepsilon^{1/2}c$, the two metrics are related by
\[
 |\mathfrak q_{\varepsilon,\mu}|
 =e^{2\varphi_{\varepsilon,\mu}}|\omega_{(0,c_1)}|^2,
 \qquad
 \varphi_{\varepsilon,\mu}
 =\frac12\log
 \frac{|x^2-1-\varepsilon\mu|}
 {16\pi^2c^2|\varepsilon|\,|x|^2}.
\]
This allows one to compare the corresponding determinants of the Friedrichs Laplacians by using the singular anomaly formula~\cite[Corollary~1.3]{KalvinJFA}. 

Since both metrics are flat away from their conical points,  the
curvature terms in the singular anomaly formula vanish, while all its
local terms are read directly from the logarithmic behavior of
$\varphi_{\varepsilon,\mu}$.  As a result, the singular anomaly formula
implies
\[
 \begin{aligned}
 \frac{\Det\Delta_{|\mathfrak q_{\varepsilon,\mu}|}}
      {\Area(X_{a,b},|\mathfrak q_{\varepsilon,\mu}|)}
 ={}&2^{-4/9}3^{-4/9}\pi^{-5/9}
   \exp\left(\frac{14}{3}\zeta_R'(-1)\right)
   \left(\frac{\Gamma(1/3)}{\Gamma(2/3)}\right)^{4/3}
   \\
 &\times
   \frac{|(1+\varepsilon\mu)\mu(\mu-A)(\mu-B)|^{1/18}}
        {c^{1/2}|\varepsilon|^{1/12}|ab|^{1/12}}
   \frac{\Det\Delta_{|\omega_{(0,c_1)}|^2}}
        {\Area(X_{a,b},|\omega_{(0,c_1)}|^2)}.
 \end{aligned}
\]
Together with Proposition~\ref{prop:separating-asymptotics}, this
reduces $C_{\mathrm{BB}}$ to the area-normalized torus determinant.  The
latter is evaluated explicitly by the argument used in the proof of
Lemma~\ref{lem:elliptic-determinants}:

\[
 \frac{\Det\Delta_{|\alpha|^2}}
      {\Area(T,|\alpha|^2)}
 =\frac{|\lambda(\lambda-1)|^{1/3}}
 {2^{7/3}\pi^2}\Area(\PP^1,m_\lambda),
 \qquad \lambda=\frac BA.
\]
Combining the preceding formulas, we obtain
\begin{equation}\label{eq:bismut-bost-explicit-constant}
 \begin{split}
 C_{\mathrm{BB}}
 &=2^{-28/9}3^{-4/9}\pi^{-32/9}
   \exp\left(\frac{32}{3}\zeta_R'(-1)\right)
   \left(\frac{\Gamma(1/3)}{\Gamma(2/3)}\right)^{4/3}
   |\mu(\mu-A)(\mu-B)|^{1/18}
   \\
 &\hspace{16mm}\times
   |\lambda(\lambda-1)|^{2/3}
   \Area(\PP^1,m_\lambda)^2,
   \qquad \lambda=\frac BA,
 \end{split}
\end{equation}
where the last area is given explicitly by
\eqref{eq:base-area-function}.

We conclude this subsection by justifying the use of the
Bismut--Bost and M\"uller--M\"uller results for the present conical
family.

\begin{lemma}\label{lem:conical-analytic-surgery}
The Bismut--Bost asymptotics~\eqref{eq:Bismut--Bost} and the M\"uller--M\"uller
identity~\eqref{eq:bismut-bost-relative-constant}, originally stated for smooth metrics, remain valid for the
Friedrichs Laplacians of the conical metrics
$|\mathfrak q_{\varepsilon,\mu}|$.  As in the smooth case, the
coefficient $C_{\mathrm{BB}}$ depends on the geometry of the two
limiting components and satisfies~\eqref{eq:bismut-bost-relative-constant}, whereas the cylindrical factor
$L_\varepsilon e^{-\pi L_\varepsilon/3}$ is universal.
\end{lemma}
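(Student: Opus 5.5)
The plan is to keep the conical points away from the neck and localize. The key observation is that the four zeros of $\mathfrak q_{\varepsilon,\mu}$ lie over $x^2=1+\varepsilon\mu$, that is, over $u=\mu$. Since $\mu\notin\{0,A,B\}$ is fixed, these points stay in a fixed compact part of each limiting torus, at distance bounded below from the region $|\zeta_\pm|\le1$. There the metric is the exact product cylinder $ds^2+d\theta^2$ of modulus $L_\varepsilon$, up to the $o(1)$ correction in the circumference. So every singularity lies in the "thick" part, where the family converges smoothly (in the conical sense) to $(T\setminus\{p\},|\mathfrak q_\mu|)$. The neck itself is smooth and flat, exactly as in the smooth setting of Bismut--Bost and M\"uller--M\"uller.

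I would follow the analytic-surgery route of M\"uller--M\"uller rather than the Quillen-metric machinery of Bismut--Bost. The first step is to cut $X_{a,b}$ along the two circles $|\zeta_\pm|=1$ and apply the Burghelea--Friedlander--Kappeler gluing formula for Friedrichs Laplacians. This writes $\Det\Delta_{|\mathfrak q_{\varepsilon,\mu}|}$ as a product of four factors: the Dirichlet determinants of the two compact conical pieces, the Dirichlet determinant of the flat neck cylinder, and the determinant of the Dirichlet-to-Neumann operator on the two cut circles, divided by the area and circle-length factors. The BFK formula is local near the cutting hypersurface and needs only the heat-kernel/zeta regularity of the Dirichlet problems on the pieces. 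For a Friedrichs extension at a cone point, this regularity is supplied by the standard conical heat-trace expansion, which is the same input already used in \cite{KalvinJFA}. Next I would observe two things. The neck determinant and the Dirichlet-to-Neumann contribution of the long cylinder are computed by separation of variables in $\theta$, exactly as in the smooth case; this produces the universal factor $L_\varepsilon e^{-\pi L_\varepsilon/3}$ together with the factor $2$. Meanwhile, the Dirichlet determinants of the compact pieces depend on $\varepsilon$ only through the $o(1)$ perturbation of the metric near the cut circles.

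The last step is to identify the limit of the piece determinants with the relative determinant. I would invoke M\"uller's relative-determinant theory, which expresses $\Det_{\mathrm{rel}}(\Delta_{|\mathfrak q_\mu|},\Delta_0)$ as the Dirichlet determinant on the compact piece times the Dirichlet-to-Neumann factor of the attached half-cylinder. Applying this on $(T\setminus\{p\},|\mathfrak q_\mu|)$ gives \eqref{eq:bismut-bost-relative-constant}. Here I need that the relative trace $\operatorname{Tr}(e^{-t\Delta_{|\mathfrak q_\mu|}}-e^{-t\Delta_0})$ has the usual small-$t$ expansion. The cone points contribute only local terms, and their constants are $\varepsilon$-independent, so they are simply absorbed into $C_{\mathrm{BB}}$.

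I expect the main obstacle to be the uniform convergence of the Dirichlet determinants on the thick pieces as $\varepsilon\to0$. The metrics there converge in $C^\infty$ away from the cone points, and coincide exactly near them after a conformal change that is $1+o(1)$ smooth. My first attempt would be to apply the singular anomaly formula \cite[Theorem~1.1]{KalvinJFA} in its Dirichlet-boundary version to compare with the limit metric. The conformal factor is smooth and tends to $0$, so all anomaly terms tend to $0$, and the conical constants $C(\beta)$ cancel identically. Failing that, I would fall back on a variational argument: show that the Dirichlet resolvents converge in trace norm, uniformly for the conical Friedrichs domains, which are $\varepsilon$-independent since the cone points are fixed in the coordinate $u$.
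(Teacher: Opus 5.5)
Your route is genuinely different from the paper's, and in outline it works. You re-run the M\"uller--M\"uller analytic surgery directly for the conical Friedrichs Laplacian: BFK gluing along the neck, separation of variables on the neck, and M\"uller's decomposition of $\Det_{\mathrm{rel}}$. This is legitimate because the cone points over $u=\mu$ stay in the thick part.

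The paper never cuts the closed surface. It smooths $|\mathfrak q_{\varepsilon,\mu}|=|x|\,|dx|^2$ inside fixed disks to a smooth conformal family $m_0$. Because this local form does not depend on $\varepsilon$, the closed singular anomaly formula gives an $\varepsilon$-independent ratio $e^{\mathcal A_\delta}$ of area-normalized determinants. Bismut--Bost for $m_0$ then transfers with $C_{\mathrm{BB}}=e^{\mathcal A_\delta}C_{\mathrm{BB},\delta}$. The relative decomposition formula is used only to compare $g=|\mathfrak q_\mu|$ with its smoothing $g_\delta$. Their Neumann jump operators coincide by conformal invariance, since $g$ and $g_\delta$ agree near the cut. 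The Dirichlet anomaly formula on $K$ then gives $\Det_{\mathrm{rel}}(\Delta_g,\Delta_0)=e^{\mathcal A_\delta/2}\Det_{\mathrm{rel}}(\Delta_{g_\delta},\Delta_0)$, because each component carries two of the four cones. M\"uller--M\"uller's (7.39) for $g_\delta$ then closes the identity.

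The paper's route uses the smooth theorems as black boxes and inherits from them all the $\varepsilon$-dependence of the thick parts. Your route needs no Bismut--Bost input, since the asymptotic form falls out of the surgery. The price is that you must redo the long-neck analysis and prove continuity of the piece data yourself.

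Three steps of your sketch need tightening.

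First, $\det' R$ is not obtained by separation of variables. $R$ is the neck Dirichlet-to-Neumann map plus the maps $N_{K_j}$ of the pieces, and the $N_{K_j}$ do not diagonalize in $\theta$. The correct step is M\"uller--M\"uller's. $R$ splits exactly into its action on constants and its action on mean-zero functions. On constants its only nonzero eigenvalue is $2/H_\varepsilon$, on the antisymmetric constant. The mean-zero part is exponentially close to $\bigoplus_j(N_{K_j}+|D_\theta|)$, whose determinants pair with $\Det\Delta^D_{K_j}$ to give $\Det_{\mathrm{rel}}$. The universal factor $L_\varepsilon e^{-\pi L_\varepsilon/3}$ comes from the Dirichlet neck determinant, this zero mode, and the BFK prefactor $\Area/\mathrm{length}$. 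Note that the area multiplies; it does not divide. This step goes through verbatim because $N_{K_j}$ depends only on the conformal structure and the boundary metric, so the cone points never enter it. That is exactly the observation the paper uses.

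Second, the cone-point constants must cancel rather than be ``absorbed into $C_{\mathrm{BB}}$''. They enter only through $\Det\Delta^D_{K_j}$, which appears both in BFK for the closed surface and in the decomposition of $\Det_{\mathrm{rel}}$. An extra absorbed factor $\kappa$ would give $C_{\mathrm{BB}}=2\kappa(\Det_{\mathrm{rel}})^2$, not the stated identity.

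Third, separation of variables needs an exact product neck, so you must cut along closed trajectories of $\mathfrak q_{\varepsilon,\mu}=(1+\varepsilon u)^{-1}\mathfrak q_\mu$. These trajectories differ from $|\zeta_\pm|=1$ at order $\varepsilon$, so the pieces then move non-conformally. The Dirichlet anomaly formula alone therefore no longer gives convergence. Your trace-norm fallback, or a continuity argument for $\Det_{\mathrm{rel}}$, is genuinely needed there.
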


\begin{proof}
 In a neighborhood of each conical point write $ |\mathfrak q_{\varepsilon,\mu}|=|x|\,|dx|^2$ in a local coordinate $x$.
Fix a small $\delta>0$ so that the four coordinate disks
$\{|x|\leq\delta\}$ and the neck between $|\zeta_\pm|=1$ are pairwise disjoint.  Let $\chi_\delta$ be a
smooth cutoff which vanishes near $x=0$ and equals one near the
boundary of the disk, and set $\psi:=\frac12\chi_\delta\log|x|$. Replacing $|\mathfrak q_{\varepsilon,\mu}|$ by $e^{2\psi}|dx|^2$ in
these disks, and leaving it unchanged elsewhere, defines a smooth metric
$m_0$.  Thus
\[
 |\mathfrak q_{\varepsilon,\mu}|=e^{2\varphi}m_0,
 \qquad
 \varphi=\frac12(1-\chi_\delta)\log|x|
\]
in the $\delta$-disks and $\varphi=0$ elsewhere.

For the smooth family $m_0$, the Bismut--Bost formula
\cite[Theorem~13.7]{BismutBost} reads
\begin{equation}\label{eq:BB-asymptotics}
 \Det\Delta_{m_0}
 = C_{\mathrm{BB},\delta} 
 L_\varepsilon e^{-\pi L_\varepsilon/3}(1+o(1)),\quad \varepsilon\to 0.
\end{equation}

On the other hand, $|\mathfrak q_{\varepsilon,\mu}|$ is flat, and $\psi(0)=0$.  Hence the singular
anomaly formula \cite[Theorem~1.1(1)]{KalvinJFA} implies
$$
 \log
 \frac{\Det\Delta_{|\mathfrak q_{\varepsilon,\mu}|}/
       \Area(X_{a,b},|\mathfrak q_{\varepsilon,\mu}|)}
      {\Det\Delta_{m_0}/\Area(X_{a,b},m_0)}
 =-\frac4{12\pi}
 \int_{|x|\leq\delta}K_0\varphi\,dA_0 
 -4C\!\left(\frac12\right)
 =:\mathcal A_\delta,
$$
where $K_0$ is the Gaussian curvature of $m_0$ and 
$\mathcal A_\delta$ is independent of $\varepsilon$. Moreover, the areas satisfy
\[
 \frac{\Area(X_{a,b},|\mathfrak q_{\varepsilon,\mu}|)}
      {\Area(X_{a,b},m_0)}\longrightarrow1,\qquad \varepsilon\to 0.
\]

This together with~\eqref{eq:BB-asymptotics} implies 
\[
 \Det\Delta_{|\mathfrak q_{\varepsilon,\mu}|}
 =C_{\mathrm{BB}}
 L_\varepsilon e^{-\pi L_\varepsilon/3}(1+o(1)),
 \qquad
 C_{\mathrm{BB}}
 =e^{\mathcal A_\delta}
  C_{\mathrm{BB},\delta}.
\]
Applying the anomaly formula with two admissible radii
$\delta_1$ and $\delta_2$ (possibly with completetly different smoothings), and using that both smoothings leave
$L_\varepsilon$ unchanged and that their areas have ratio tending to
one, gives
\[
 e^{\mathcal A_{\delta_1}}C_{\mathrm{BB},\delta_1}
 =e^{\mathcal A_{\delta_2}}C_{\mathrm{BB},\delta_2}.
\]
This justifies the Bismut--Bost asymptotic formula~\eqref{eq:Bismut--Bost} for the singular setting.

It remains to prove \eqref{eq:bismut-bost-relative-constant}. Put
\(g=|\mathfrak q_\mu|\), and let \(g_\delta\) be obtained from \(g\)
by the same smoothing at its two conical points. Cut their common
cylindrical end at the  cross-section $|\zeta_\pm|=1$ and denote the compact part by
\(K\). The proof of the relative decomposition formula in
\cite[Theorem~1, Section~2.4, equation~(2.33) and the passage to zero following it]{HillairetKalvinKokotovCMP} carries over to the present
setting without difficulty. Since \(g\) and \(g_\delta\) are conformal and coincide
near the cut, their Neumann jump operators at zero  coincide.
Thus the decomposition formula yields
\[
\log \frac{\Det_{\mathrm{rel}}(\Delta_g,\Delta_0)}
      {\Det_{\mathrm{rel}}(\Delta_{g_\delta},\Delta_0)}
 =
\log  \frac{\Det\Delta^D_{(K,g)}}
      {\Det\Delta^D_{(K,g_\delta)}},
\]
where $\Delta^D_{(K,g)}$ and $\Delta^D_{(K,g_\delta)}$ are the Friedrichs Dirichlet Laplacians on $K$.
The singular anomaly formula
\cite[Theorem~1.1(2)]{KalvinJFA} identifies the right hand side  with
\(\frac 1 2 \mathcal A_\delta\), since each limiting component
contains only two of the four conical points. Therefore
\[
 \Det_{\mathrm{rel}}(\Delta_g,\Delta_0)
 =e^{\frac 1 2 \mathcal A_\delta}
  \Det_{\mathrm{rel}}(\Delta_{g_\delta},\Delta_0).
\]
For two admissible radii $\delta_1$ and $\delta_2$ (possibly with different smoothings), the same identity
gives
\[
 e^{\frac12\mathcal A_{\delta_1}}
 \Det_{\mathrm{rel}}(\Delta_{g_{\delta_1}},\Delta_0)
 =e^{\frac12\mathcal A_{\delta_2}}
 \Det_{\mathrm{rel}}(\Delta_{g_{\delta_2}},\Delta_0)
 =\Det_{\mathrm{rel}}(\Delta_g,\Delta_0),
\]
so this quantity is independent of the smoothing radius.
For the smooth metric, \cite[(7.39)]{MullerMuller} gives
\[
 C_{\mathrm{BB},\delta}
 =2\bigl(\Det_{\mathrm{rel}}
   (\Delta_{g_\delta},\Delta_0)\bigr)^2.
\]
Together with \(C_{\mathrm{BB}}=e^{\mathcal A_\delta}
C_{\mathrm{BB},\delta}\), this implies
\[
 C_{\mathrm{BB}}
 =2\left(e^{\frac12\mathcal A_\delta}
   \Det_{\mathrm{rel}}(\Delta_{g_\delta},\Delta_0)\right)^2
 =2\bigl(\Det_{\mathrm{rel}}
   (\Delta_g,\Delta_0)\bigr)^2,
\]
which is \eqref{eq:bismut-bost-relative-constant}.
\end{proof}

\subsection{The one-node nonseparating degeneration}
\label{sec:nonseparating-degeneration}

Fix $b\in\CC\setminus\{0,1\}$, and let
\begin{equation}\label{eq:nonseparating-family}
 a=\varepsilon,\qquad \varepsilon\to0.
\end{equation}
At $\varepsilon=0$ the curve has one nonseparating node at
$(x,y)=(0,0)$.  Its normalization is the elliptic curve
\begin{equation}\label{eq:nonseparating-normalization}
 \begin{gathered}
 T:\quad v^2=(x^2-1)(x^2-b),\\
 (x,v)\longmapsto(x,y=xv).
 \end{gathered}
\end{equation}
The two preimages of the node are
$p_\pm=(0,\pm\sqrt b)$.  The algebraic limit is the nodal curve
obtained by identifying $p_-$ with $p_+$ on $T$.  This algebraic
degeneration has two different metric realizations within the family
$|\omega_{\boldsymbol c}|^2$.  If $c_0=0$, the limiting differential
is holomorphic at $p_\pm$, and the handle collapses at finite
distance.  If $c_0\ne0$, it has simple poles with opposite residues at
$p_\pm$, and the handle stretches into a long cylinder whose limit
has two cylindrical ends.  In contrast to the separating case, both
metric regimes arise directly from the original family $| \omega_{\boldsymbol c}|^2$; no auxiliary
metric is required.  We consider the two cases separately below.

\subsubsection{The holomorphic limit}
\label{sec:holomorphic-limit}

In this subsection we consider the case $c_0=0$ and
$c_1\in\CC\setminus\{0\}$.  Using $y=xv$ on the normalization $T$
in~\eqref{eq:nonseparating-normalization}, as
$a=\varepsilon\to0$ we obtain
\begin{equation}\label{eq:holomorphic-limit-differential}
 \omega_{(0,c_1)}\longrightarrow
 \alpha:=2c_1\frac{dx}{v}.
\end{equation}
The differential $\alpha$ is holomorphic and nonzero at $p_\pm$.
Thus the vanishing handle
collapses to the identification $p_-\sim p_+$, while the metric
completion of the normalization is the smooth flat torus
$(T,|\alpha|^2)$.  The vanishing cycle is the union of two symmetric
saddle connections of equal length.  Let $s_\varepsilon$ denote their
common length in the metric $|\omega_{(0,c_1)}|^2$.

\begin{figure}[H]
\centering
\begin{tikzpicture}[line cap=round,line join=round]
 \definecolor{cycleblue}{RGB}{66,82,115}
 \definecolor{pointochre}{RGB}{164,132,84}
 \node[inner sep=0] at (-3.45,0)
  {\includegraphics[width=5.55cm]{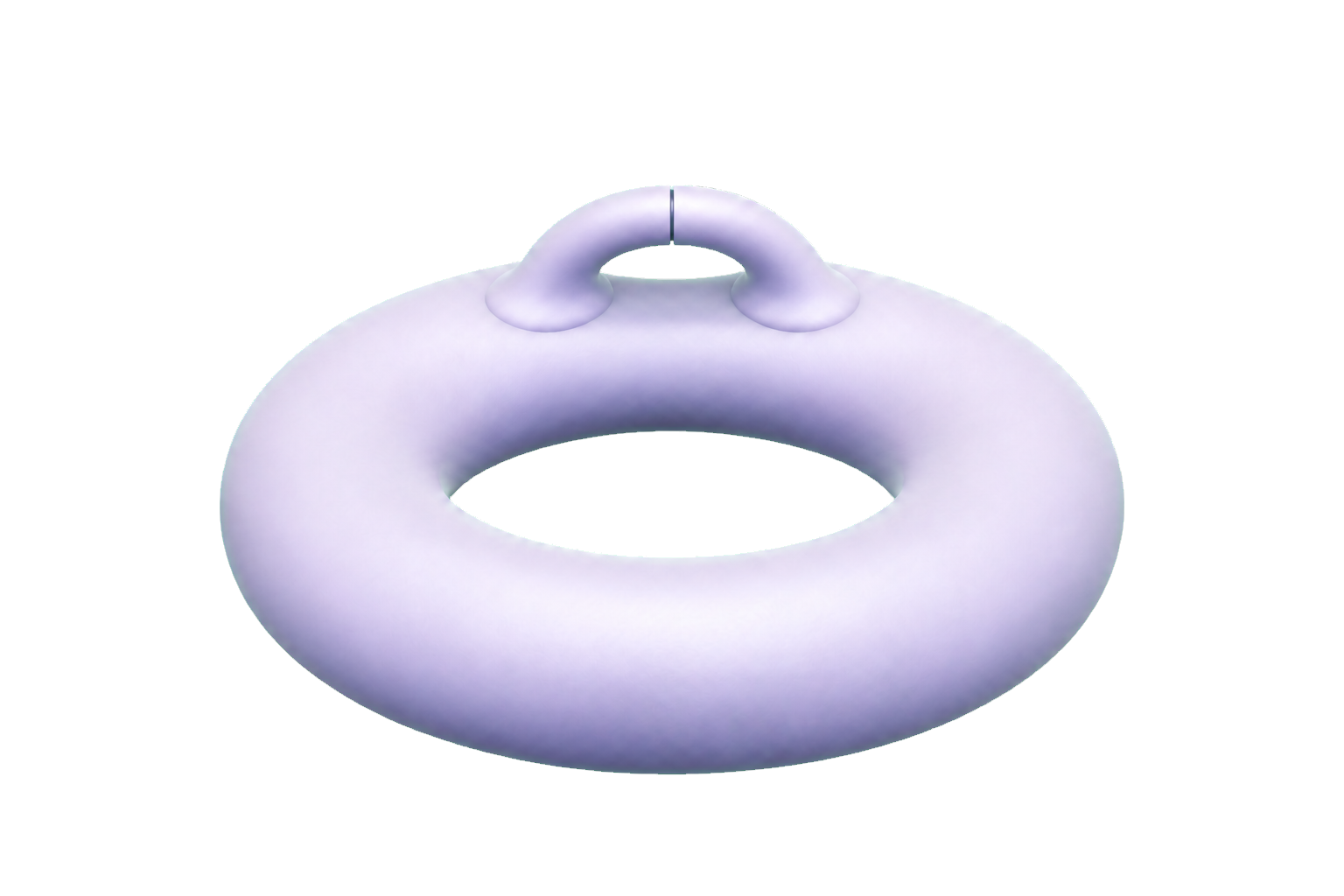}};
 \node[inner sep=0] at (3.45,0)
  {\includegraphics[width=5.35cm]{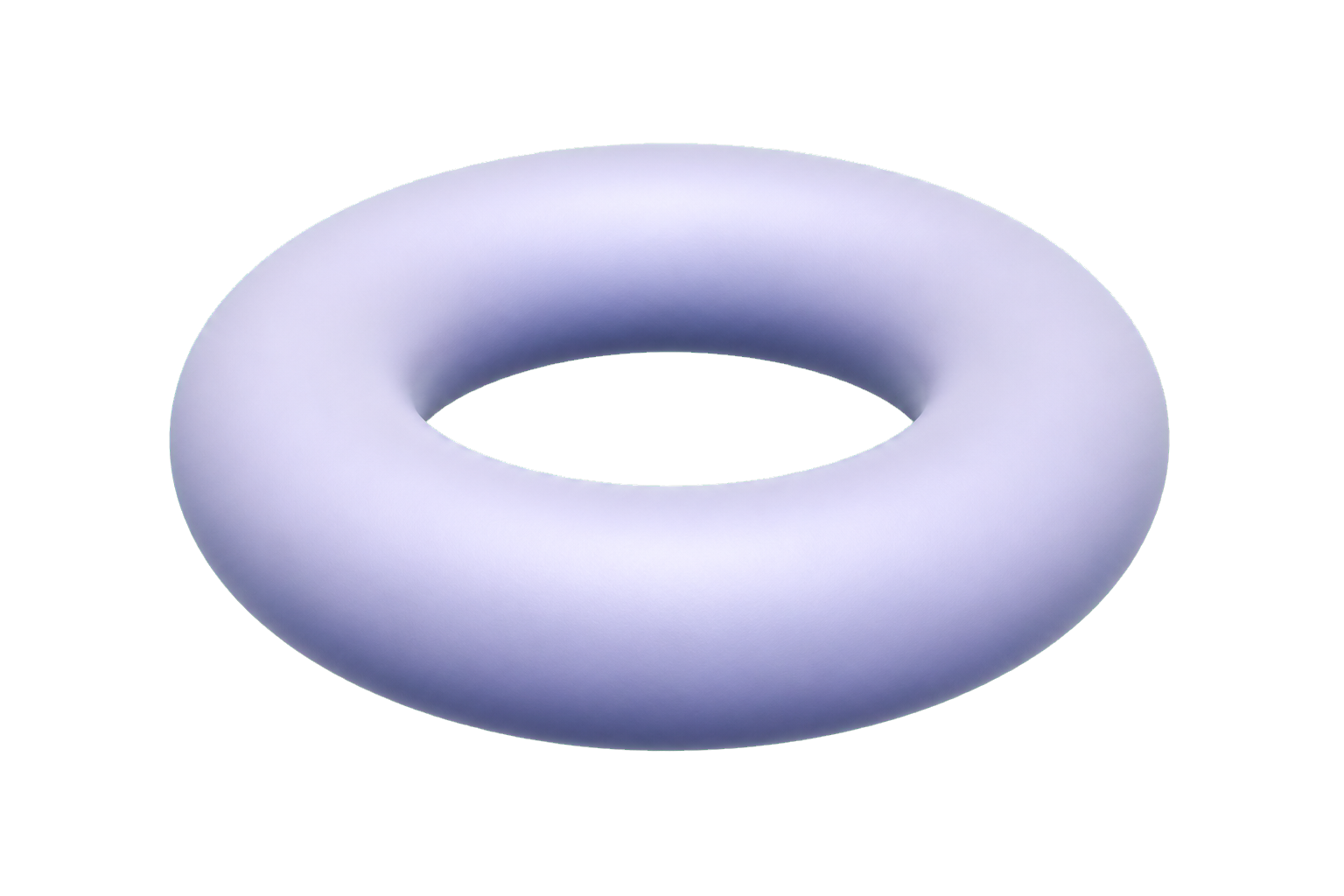}};
 \draw[-{Latex[length=2.1mm]},thick] (-.42,0)--(.42,0);
 \fill[cycleblue] (-3.92,.48) circle (2.5pt);
 \fill[pointochre] (-2.98,.48) circle (2.5pt);
 \fill[cycleblue] (2.82,.42) circle (2.5pt)
  node[above left=1pt] {$p_-$};
 \fill[pointochre] (4.08,.42) circle (2.5pt)
  node[above right=1pt] {$p_+$};
 \node[below] at (-3.45,-1.92)
  {$\bigl(X_{a,b},|\omega_{(0,c_1)}|^2\bigr)$};
 \node[below] at (3.45,-1.92) {$\bigl(T,|\alpha|^2\bigr)$};
\end{tikzpicture}
\caption{Holomorphic nonseparating limit: the marked handle collapses
to the two points $p_-$ and $p_+$ on $T$.}
\label{fig:nonseparating-holomorphic-limit}
\end{figure}

\begin{proposition}\label{prop:nonseparating-asymptotics}
Let $b,c_1\in\CC\setminus\{0\}$, with $b\ne1$, and set
$a=\varepsilon$.  Let $T$ and $\alpha$ be as in
\eqref{eq:nonseparating-normalization} and
\eqref{eq:holomorphic-limit-differential}, respectively, and let
$s_\varepsilon$ be the common length, in the metric
$|\omega_{(0,c_1)}|^2$, of the two symmetric saddle connections
forming the vanishing cycle on $X_{a,b}$.  Then, as
$a=\varepsilon\to0$, we have
\[
 s_\varepsilon
 =4|c_1|\sqrt{\frac{|\varepsilon|}{|b|}}
 \bigl(1+o(1)\bigr),
\]
and
\begin{equation}\label{eq:nonseparating-a-asymptotic}
 \frac{\Det\Delta_{|\omega_{(0,c_1)}|^2}}
 {\Area(X_{\varepsilon,b},|\omega_{(0,c_1)}|^2)}
 =2^{2/3}e^{6\zeta_R'(-1)}
 \bigl(s_\varepsilon^{1/2}|\log s_\varepsilon|\bigr)
 \frac{\Det\Delta_{|\alpha|^2}}
 {\Area(T,|\alpha|^2)}
 \bigl(1+o(1)\bigr).
\end{equation}
\end{proposition}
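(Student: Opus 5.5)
The plan is to specialize Theorem~\ref{thm:main} to $c_0=0$, $a=\varepsilon$, and then track each factor in \eqref{eq:main-formula} as $\varepsilon\to0$. I will first dispose of the length $s_\varepsilon$. The zeros of $\omega_{(0,c_1)}$ lie over $x=0$. The vanishing cycle runs through the points $x=\pm\sqrt\varepsilon$ and over $x=0$. Near $x=0$ we have $y^2=(x^2-1)(x^2-\varepsilon)(x^2-b)\approx b(x^2-\varepsilon)$, so
\[
\omega_{(0,c_1)}\approx \frac{2c_1}{\sqrt b}\,\frac{x\,dx}{\sqrt{x^2-\varepsilon}} .
\]
Integrating from $0$ to $\sqrt\varepsilon$ gives a quantity of absolute value $2|c_1|\sqrt{|\varepsilon|/|b|}$. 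Each saddle connection consists of two such pieces, since it is symmetric under $x\mapsto -x$. This yields $s_\varepsilon=4|c_1|\sqrt{|\varepsilon|/|b|}\,(1+o(1))$, and hence $|\log s_\varepsilon|=\tfrac12|\log|\varepsilon||+O(1)$.

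Next come the algebraic factors. We have $\cP_{\varepsilon,b}(0,c_1)=-\varepsilon b c_1^6$, so
\[
|ab|^{-1/6}|\cP_{a,b}(0,c_1)|^{1/12}=|\varepsilon b|^{-1/12}|c_1|^{1/2}.
\]
The coordinates in \eqref{eq:lambda-coordinates} behave as $\lambda_1=(b-1)/(\varepsilon-1)\to1-b$ and $\lambda_2=\varepsilon(1-b)/(\varepsilon-b)=\varepsilon(1-b)/(-b)\,(1+o(1))\to0$.

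For the $\lambda_1$ factor I will check that the quotient of $T$ by $x\mapsto-x$ is the cubic $Y^2=z(z-1)(z-b)$, with branch points $\{0,1,b,\infty\}$. Its Legendre parameter is therefore Möbius-equivalent to the limit of $\lambda_1$, up to the $S_3$ action, which preserves the Kronecker factor. The factor $(T,|\alpha|^2)$ is a degree-two cover of this torus. So by the Kronecker limit formula and \eqref{eq:kronecker-lambda-factor}, the limit of $|\lambda_1(\lambda_1-1)|^{1/3}\Area(\PP^1,m_{\lambda_1})$ equals
\[
2^{7/3}\pi^2\,\frac{\Det\Delta_{|\alpha|^2}}{\Area(T,|\alpha|^2)}
\]
exactly as in the separating case, once I confirm that area-normalized torus determinants are invariant under the isogeny-free identification used there.

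The degenerating factor $|\lambda_2(\lambda_2-1)|^{1/3}\Area(\PP^1,m_{\lambda_2})$ with $\lambda_2\to0$ is the main obstacle. I will use $\Area=2\Im(\overline{I_0}I_1)$, with the half-periods chosen adapted to $\lambda\to0$. One half-period is $\int_0^\lambda du/\sqrt{u(1-u)(\lambda-u)}\to\pi$ up to a unit factor. The other is $\int_\lambda^1 du/\sqrt{u(1-u)(u-\lambda)}=\log(16/\lambda)+o(1)$, which is the classical logarithmic asymptotics of $F$ at $1$. Together these give
\[
\Area(\PP^1,m_\lambda)=2\pi\log\frac{16}{|\lambda|}\,(1+o(1)).
\]
Equivalently, \eqref{eq:kronecker-lambda-factor} can be used with $\Im\tau\to\infty$ and $|\etaD|^4\sim|q|^{1/3}$; this cross-checks the constant, since $q=\lambda/16+\dots$. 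The leading order then only needs
\[
\log\frac{16}{|\lambda_2|}\sim|\log|\varepsilon||\sim 2|\log s_\varepsilon|,
\]
and
\[
|\lambda_2|^{1/3}=|\varepsilon|^{1/3}|1-b|^{1/3}|b|^{-1/3}(1+o(1)).
\]

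Finally I will multiply everything together. The powers combine as $|\varepsilon|^{-1/12+1/3}=|\varepsilon|^{1/4}$, which should match $s_\varepsilon^{1/2}$ after expressing it through $|c_1|$ and $|b|$. I then have to verify that the leftover $|b|$, $|1-b|$ and $|c_1|$ powers cancel, and that $\frac{1}{8\pi^3}\cdot 2\pi\cdot 2\cdot 2^{7/3}\pi^2\cdot 4^{-1/2}$ reduces to $2^{2/3}$. This bookkeeping, together with the $S_3$ identification of $\lambda_1$ with the Legendre parameter of $T$, is where I expect errors to hide.
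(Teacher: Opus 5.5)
\textbf{There is a genuine gap: your limiting curve for the $\lambda_1$-factor is $E$, not $T$.}

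Your overall route is the paper's. You specialize \eqref{eq:main-formula} to $c_0=0$, $a=\varepsilon$, and use $\cP_{\varepsilon,b}(0,c_1)=-\varepsilon bc_1^6$, $\lambda_1\to1-b$, $\lambda_2=-\varepsilon(1-b)b^{-1}(1+o(1))$ and $\Area(\PP^1,m_{\lambda_2})=2\pi\log(16/|\lambda_2|)(1+o(1))$. Your treatment of $s_\varepsilon$ and of the degenerating $\lambda_2$-factor is fine. The gap is in the $\lambda_1$-factor.

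The limit $1-b$ is, up to the $S_3$ action, the Legendre parameter of the quotient $E:\ Y^2=z(z-1)(z-b)$ of $T$ by the fixed-point-free involution $(x,v)\mapsto(-x,-v)$. It is not the Legendre parameter of $T$ itself. As you note, $T\to E$ is an unramified double cover, so it is a $2$-isogeny rather than an isomorphism. This is unlike the separating case, where $\lambda_1=B/A$ is literally the Legendre parameter of the limiting torus. The quantity $\Im\tau\,|\etaD(\tau)|^4$ is modular-invariant but not isogeny-invariant, so the confirmation you postpone fails. The limit of $|\lambda_1(\lambda_1-1)|^{1/3}\Area(\PP^1,m_{\lambda_1})$ is $2^{7/3}\pi^2\Det\Delta_{|\nu|^2}/\Area(E,|\nu|^2)$ with $\nu=dz/Y$, not the corresponding quantity for $(T,|\alpha|^2)$.

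\textbf{Your bookkeeping exposes this.} The constant $\frac1{8\pi^3}\cdot2\pi\cdot2\cdot2^{7/3}\pi^2\cdot4^{-1/2}$ equals $2^{1/3}$, not $2^{2/3}$. The $b$-powers $|b|^{-1/12}\cdot|b|^{-1/3}|1-b|^{1/3}\cdot|b|^{1/4}=|(1-b)^2/b|^{1/6}$ do not cancel. What your computation actually proves is
\[
 \frac{\Det\Delta_{|\omega_{(0,c_1)}|^2}}{\Area(X_{\varepsilon,b},|\omega_{(0,c_1)}|^2)}
 =2^{1/3}e^{6\zeta_R'(-1)}\left|\frac{(1-b)^2}{b}\right|^{1/6}
 s_\varepsilon^{1/2}|\log s_\varepsilon|\,
 \frac{\Det\Delta_{|\nu|^2}}{\Area(E,|\nu|^2)}\bigl(1+o(1)\bigr).
\]
The missing step is the isogeny relation
\[
 \frac{\Det\Delta_{|\alpha|^2}}{\Area(T,|\alpha|^2)}
 =2^{-1/3}\left|\frac{(1-b)^2}{b}\right|^{1/6}
 \frac{\Det\Delta_{|\nu|^2}}{\Area(E,|\nu|^2)},
\]
which absorbs exactly the leftover factor.

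\textbf{How to prove the missing relation.} First identify the index-two sublattice. The cover is obtained by adjoining $\sqrt z$, so the cycle of $E$ encircling $z\in\{0,b\}$ lifts to $T$ only when doubled. Hence, in a period basis with $b=\vartheta_2(\tau)^4/\vartheta_3(\tau)^4$, the torus $T$ has modulus $\tau/2$. Then combine the Kronecker limit formula with
\begin{itemize}
\item $\etaD(\tau/2)^2=\vartheta_4(\tau)\etaD(\tau)$,
\item $2\etaD^3=\vartheta_2\vartheta_3\vartheta_4$,
\item $1-b=\vartheta_4^4/\vartheta_3^4$.
\end{itemize}
This gives the factor $2^{-1/3}|(1-b)^2/b|^{1/6}$. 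The other two index-two sublattices give different factors, so this identification is essential.

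\textbf{Minor point.} The two halves of each saddle connection are exchanged by $h$, not by $x\mapsto-x$; the latter swaps the two connections. Your value of $s_\varepsilon$ is unaffected.
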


\begin{proof}
The differential $\omega_{(0,c_1)}$ has two zeros over $x=0$.
Each shrinking saddle connection joins these zeros through one of the
branch points $x=\pm\sqrt a$.  Traversing the two sheets through
$x=\sqrt a$, its period is
\[
4c_1\int_0^{\sqrt a}
 \frac{x\,dx}
 {\sqrt{(x^2-1)(x^2-a)(x^2-b)}}
 =
 \frac{4c_1\sqrt a}{\sqrt{-b}}
 \bigl(1+o(1)\bigr).
\]
Its modulus gives the stated formula for $s_\varepsilon$.

In order to deduce the spectral determinant asymptotics, consider the elliptic quotient
\begin{equation}\label{eq:nonseparating-elliptic-quotient}
 \begin{gathered}
 E:\quad Y^2=z(z-1)(z-b),
 \qquad
 \nu=\frac{dz}{Y},\\
 \pi:T\longrightarrow E,
 \qquad
 (x,v)\longmapsto(z=x^2,Y=xv).
 \end{gathered}
\end{equation}
The map $\pi$ is an unramified double cover, and the differentials are related as $
 \alpha=c_1\pi^*\nu$.

The Legendre parameters of $E_1$ and $E_2$ satisfy
\[
 \lambda_1\longrightarrow1-b,
 \qquad
 \lambda_2
 =-\frac{1-b}{b}\varepsilon(1+o(1)),\qquad a=\varepsilon\to 0.
\]
Accordingly, $E_1$ converges to $E$, whereas $E_2$ acquires a node, cf.~\eqref{eq:elliptic-quotients-intro}.
This together with the relation~\eqref{eq:kronecker-lambda-factor} between the Kronecker formula and the area $(\PP^1, m_\lambda )$ in~\eqref{eq:base-area-function} implies
\begin{equation}\label{eq:area-to-det}
 \begin{aligned}
 |\lambda_1(\lambda_1-1)|^{1/3}
 \Area(\PP^1,m_{\lambda_1})
 &\longrightarrow  2^{7/3}\pi^2
 \frac{\Det\Delta_{|\nu|^2}}
 {\Area(E,|\nu|^2)},\\
 |\lambda_2(\lambda_2-1)|^{1/3}
 \Area(\PP^1,m_{\lambda_2})
 &=2\pi|\lambda_2|^{1/3}
 \log\frac{16}{|\lambda_2|}(1+o(1)).
 \end{aligned}
\end{equation}
Moreover, for the binary sextic we have
\begin{equation}\label{eq:Bin-Sex-Eqn}
 \cP_{a,b}(0,c_1)=-abc_1^6, \qquad a=\varepsilon\to 0.
\end{equation}
Substituting~\eqref{eq:area-to-det} and~\eqref{eq:Bin-Sex-Eqn}  into the explicit formula~\eqref{eq:main-formula} for the area normalized determinant of $(X_{a,b}, |\omega_{(0,c_1)}|^2)$, and then using the asymptotics
for $s_\varepsilon$, we obtain
\begin{equation}\label{eq:asymp-det-deg21}
 \begin{aligned}
 \frac{\Det\Delta_{|\omega_{(0,c_1)}|^2}}
 {\Area(X_{\varepsilon,b},|\omega_{(0,c_1)}|^2)}
 ={}&2^{1/3}e^{6\zeta_R'(-1)}
 \left|\frac{(1-b)^2}{b}\right|^{1/6}
 \bigl(s_\varepsilon^{1/2}|\log s_\varepsilon|\bigr)\\
 &\times
 \frac{\Det\Delta_{|\nu|^2}}
 {\Area(E,|\nu|^2)}
 \bigl(1+o(1)\bigr),\qquad \varepsilon\to 0.
 \end{aligned}
\end{equation}

Finally, let $\tau$ be a period modulus of $E$ such that $b=\lambda(\tau)$.
For compatible period bases, the unramified double cover
$\pi:T\to E$ in~\eqref{eq:nonseparating-elliptic-quotient} has modulus
$\tau/2$.  The Kronecker limit formula and the Dedekind eta duplication formula
therefore imply
\[
 \frac{\Det\Delta_{|\alpha|^2}}{\Area(T,|\alpha|^2)}
 =2^{-1/3}\left|\frac{(1-b)^2}{b}\right|^{1/6}
 \frac{\Det\Delta_{|\nu|^2}}{\Area(E,|\nu|^2)}.
\]
This identity together with the asymptotics~\eqref{eq:asymp-det-deg21} completes the proof of~\eqref{eq:nonseparating-a-asymptotic}.
\end{proof}
\subsubsection{The meromorphic limit}
\label{sec:meromorphic-limit}

In this subsection we consider the case  $ c_0\ne0$. Fix $c_0,c_1\in\CC$ such that
\[
 (c_0^2-c_1^2)(c_0^2-bc_1^2)\ne0.
\]
This is precisely the condition that  \(\lim_{a\to0}\mathcal P_{a,b}(c_0,c_1)\neq 0\).
As $a=\varepsilon\to0$  the differential $\omega_{\boldsymbol c}$ 
converges to the differential
\[
 \alpha:=2(c_0+c_1x)\frac{dx}{xv}
\]
on the punctured normalization
$$T^\circ=T\setminus\{p_-,p_+\}.$$ 
The residues of $\alpha$ at $p_\pm$ are
$\pm2c_0/\sqrt b$. Therefore the metric
$|\alpha|^2$ has a cylindrical end at each of these points,  of circumference
\begin{equation}\label{eq:meromorphic-cylinder-circumference}
 \ell:=\frac{4\pi|c_0|}{\sqrt{|b|}}.
\end{equation}
As $\epsilon \to 0 $ the shrinking complex handle of $X_{\varepsilon, b}$ becomes metrically a long flat
cylinder and breaks into two cylindrical ends.  The length of the long cylinder is asymptotic to
\begin{equation}\label{eq:meromorphic-cylinder-length}
 H_\varepsilon
 :=\frac{2|c_0|}{\sqrt{|b|}}
 \log\frac{16|b|}{|\varepsilon||1-b|}\to\infty\quad\text{as}\quad \varepsilon\to 0,
\end{equation}
and its  circumference $\ell_\varepsilon$ satisfies
$\ell_\varepsilon=\ell(1+o(1))$;

 The difeerential $\alpha$ also has two simple zeros: the two points over
$x=-c_0/c_1$ if $c_1\ne0$, and the two points at infinity if $c_1=0$.
Thus  the  limiting flat metric  $|\alpha|^2$  has two cylindrical ends and two  conical singularities of order $1$,
i.e. of angle $4\pi$.    
\begin{figure}[H]
\centering
\begin{tikzpicture}[line cap=round,line join=round]
 \definecolor{cycleblue}{RGB}{66,82,115}
 \node[inner sep=0] at (-3.45,0)
  {\includegraphics[width=5.35cm]{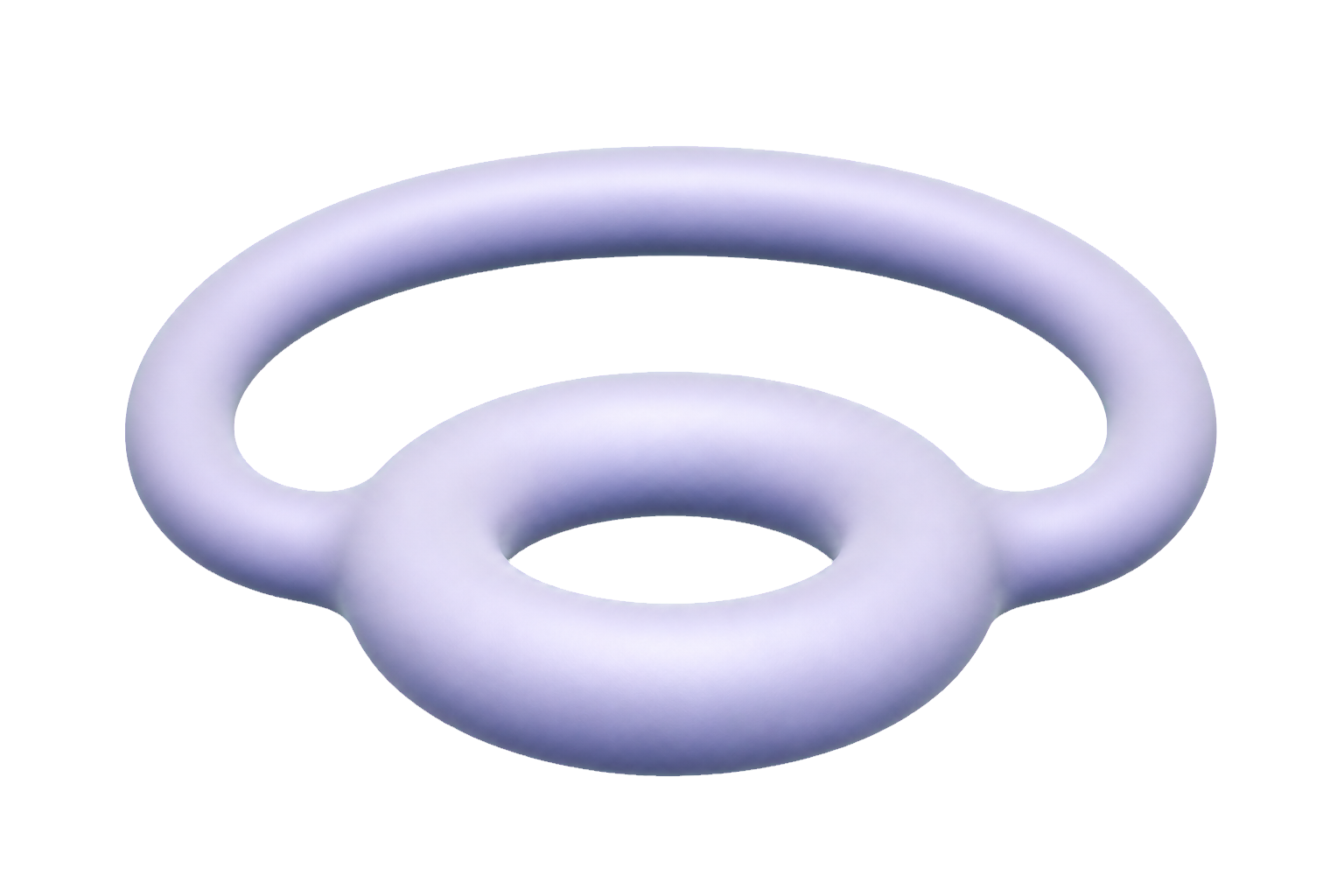}};
 \node[inner sep=0] at (3.45,0)
  {\includegraphics[width=5.65cm]{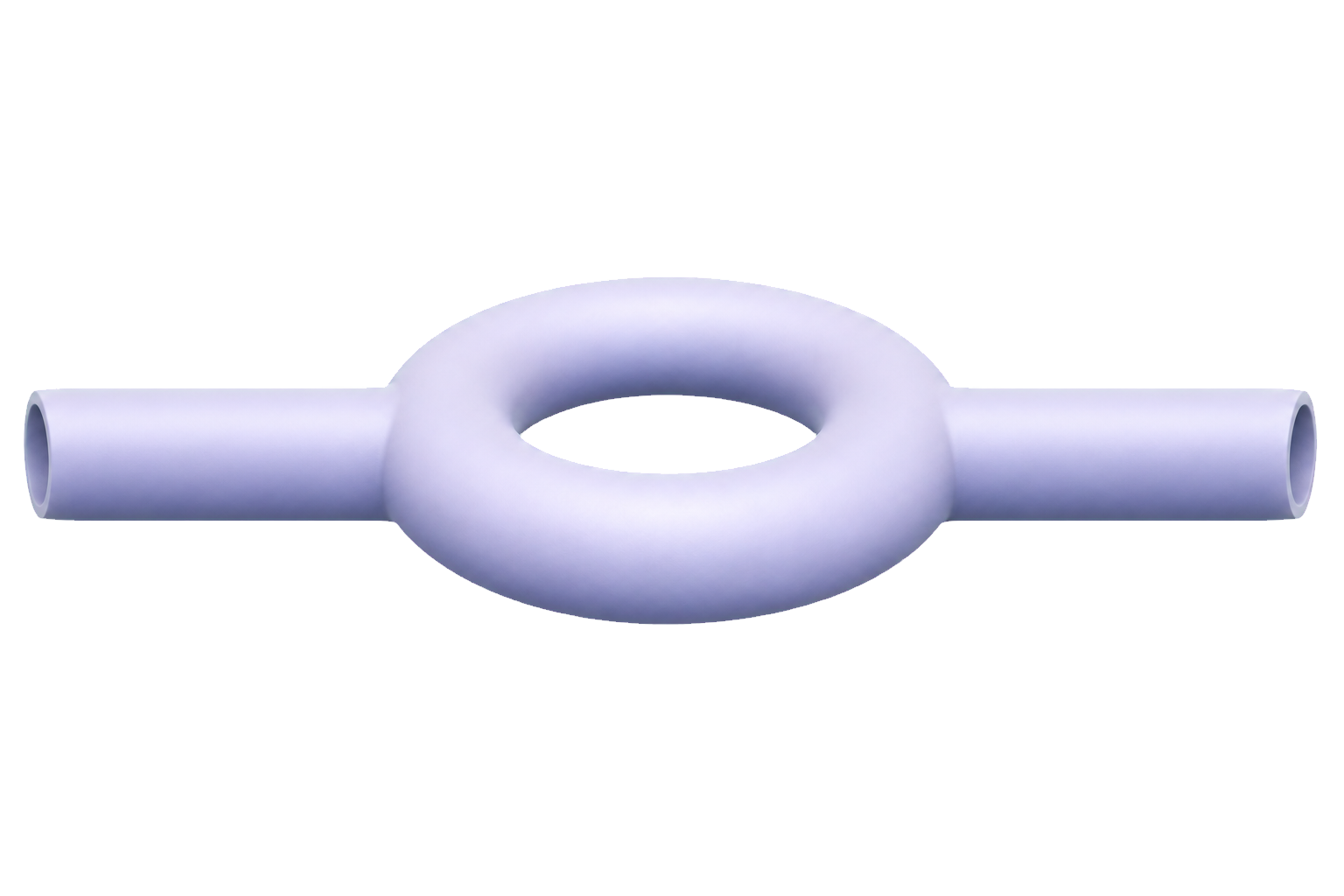}};
 \draw[-{Latex[length=2.1mm]},thick] (-.40,0)--(.40,0);
 \draw[cycleblue,line width=1.25pt]
  (-3.45,.775) arc[start angle=-90,end angle=90,
                    x radius=.10cm,y radius=.21cm];
 \draw[cycleblue,densely dashed,line width=1.1pt]
  (-3.45,1.195) arc[start angle=90,end angle=270,
                     x radius=.10cm,y radius=.21cm];
 \node[below] at (-3.45,-1.92)
 {$\bigl(X_{a,b},|\omega_{\boldsymbol c}|^2\bigr)$};
 \node[below,font=\footnotesize] at (3.45,-1.92)
  {$\bigl(T^\circ,|\alpha|^2\bigr)$};
\end{tikzpicture}
\caption{Meromorphic nonseparating limit: the marked handle becomes a
long flat cylinder and breaks into two cylindrical ends.}
\label{fig:nonseparating-meromorphic-limit}
\end{figure}

The explicit determinant formula gives the following complete
asymptotics.

\begin{proposition}\label{prop:meromorphic-nonseparating-asymptotic}
Let $b,c_0\in\CC\setminus\{0\}$, with $b\ne1$, and let $c_1\in\CC$.  Assume
that
\[
 (c_0^2-c_1^2)(c_0^2-bc_1^2)\ne0,
\]
and let $a=\varepsilon\to 0$.  The stretching cylinder has
circumference $\ell_\varepsilon=\ell(1+o(1))$, where $\ell$ is defined
in~\eqref{eq:meromorphic-cylinder-circumference}, and its length is
asymptotic to $H_\varepsilon$ defined
in~\eqref{eq:meromorphic-cylinder-length}.  Moreover, as
$\varepsilon\to0$, we have
\begin{equation}\label{eq:meromorphic-area-asymptotic}
 \Area(X_{\varepsilon,b},|\omega_{\boldsymbol c}|^2)
 =\ell H_\varepsilon\bigl(1+o(1)\bigr),
\end{equation}
and
\begin{equation}\label{eq:meromorphic-determinant-asymptotic}
\begin{split}
 \Det\Delta_{|\omega_{\boldsymbol c}|^2}
 ={}&2^{-1/3}\pi^{-1}e^{6\zeta_R'(-1)}|c_0|^{1/6}
 \left|(c_0^2-c_1^2)(c_0^2-bc_1^2)\right|^{1/12}\\
 &\times |1-b|^{1/2}\Area(\PP^1,m_{1-b})
 H_\varepsilon^2
 \exp\left(-\frac{\pi H_\varepsilon}{3\ell}\right)
 \bigl(1+o(1)\bigr).
\end{split}
\end{equation}
Here $\Area(\PP^1,m_{1-b})$ is given explicitly by
\eqref{eq:base-area-function}.
\end{proposition}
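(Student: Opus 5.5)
The plan is to substitute the asymptotics of every ingredient of Theorem~\ref{thm:main} into \eqref{eq:main-formula} and \eqref{eq:general-area} as $a=\varepsilon\to0$, and then rewrite the resulting powers of $|\varepsilon|$ and $\log|\varepsilon|$ in terms of $H_\varepsilon$ and $\ell$. The geometric claims come first. On $X_{\varepsilon,b}$ the cycle around the branch points $x=\pm\sqrt\varepsilon$ has period $2\pi i\operatorname*{res}$, and this tends to $\pm 4\pi i c_0/\sqrt b$ because of the pole of $\alpha$ at $p_\pm$. This gives $\ell_\varepsilon=\ell(1+o(1))$. The length of the cylinder is read off from the logarithmic divergence of $\int|\omega_{\boldsymbol c}|$ along a transversal path from a fixed point near $p_-$ to one near $p_+$. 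Near $x=0$ we have $\omega_{\boldsymbol c}\approx 2c_0\,dx/(\sqrt{b}\sqrt{x^2-\varepsilon})$, whose integral over $\sqrt\varepsilon\lesssim|x|\lesssim1$ produces $\frac{2|c_0|}{\sqrt{|b|}}\log(1/|\varepsilon|)$ on each side. The constant $16|b|/|1-b|$ in \eqref{eq:meromorphic-cylinder-length} is fixed by the convention that makes $H_\varepsilon$ match the $\lambda_2$-asymptotics below.

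The area is handled by \eqref{eq:general-area}. Since $\lambda_1\to1-b$, the term $4|c_1|^2|a-1|^{-1}\Area(\PP^1,m_{\lambda_1})$ stays bounded. In the other term $|a-b|\to|b|$ and $\lambda_2=-\frac{1-b}{b}\varepsilon(1+o(1))\to0$. I will use the degenerate asymptotics of the four-cone area, which is the second line of \eqref{eq:area-to-det}: $\Area(\PP^1,m_{\lambda})=2\pi\log(16/|\lambda|)(1+o(1))$ as $\lambda\to0$. This follows from \eqref{eq:base-area-function}, since $F(1-\lambda)\sim\pi^{-1}\log(16/\lambda)$ and $\lambda^{-1/2}F(\lambda^{-1})$ is continued to $\pi$-type constants. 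The result is $\Area\approx\frac{8\pi|c_0|^2}{|b|}\log\frac{16|b|}{|\varepsilon||1-b|}=\ell H_\varepsilon$, which proves \eqref{eq:meromorphic-area-asymptotic}.

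The determinant comes next. In \eqref{eq:main-formula} I replace $|ab|^{-1/6}$ by $|b|^{-1/6}|\varepsilon|^{-1/6}$. The sextic satisfies $|\cP_{\varepsilon,b}|^{1/12}\to|c_0|^{1/6}|(c_0^2-c_1^2)(c_0^2-bc_1^2)|^{1/12}$, since $c_0^2-\varepsilon c_1^2\to c_0^2$. The $j=1$ factor tends to $|(1-b)b|^{1/3}\Area(\PP^1,m_{1-b})$. For the $j=2$ factor I use $|\lambda_2(\lambda_2-1)|^{1/3}\Area=2\pi|\lambda_2|^{1/3}\cdot\frac{\sqrt{|b|}}{2|c_0|}H_\varepsilon(1+o(1))$, with $|\lambda_2|^{1/3}=|1-b|^{1/3}|b|^{-1/3}|\varepsilon|^{1/3}$. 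I then multiply by the area $\ell H_\varepsilon$. This leaves the factor $|\varepsilon|^{1/6}H_\varepsilon^2$. Finally, the definition of $H_\varepsilon$ gives $|\varepsilon|=\frac{16|b|}{|1-b|}\exp(-\sqrt{|b|}H_\varepsilon/(2|c_0|))$. Since $\pi H_\varepsilon/(3\ell)=\sqrt{|b|}H_\varepsilon/(12|c_0|)$, this yields $|\varepsilon|^{1/6}=(16|b|/|1-b|)^{1/6}e^{-\pi H_\varepsilon/(3\ell)}$.

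Collecting the constants is then mechanical. The powers of $|b|$ should cancel, the $|1-b|$ powers should combine to $|1-b|^{1/2}$, and the numerical factors $\frac{1}{8\pi^3}\cdot2\pi\cdot\frac{4\pi}{2}\cdot 16^{1/6}\cdot\pi$, together with the $|c_0|$ powers, should reduce to $2^{-1/3}\pi^{-1}|c_0|^{1/6}$. The main obstacle I expect is the precise degenerate asymptotics of $\Area(\PP^1,m_\lambda)$ as $\lambda\to0$, including the constant $16$ and the correct branch of the analytic continuation in \eqref{eq:base-area-function}. I would verify it independently through \eqref{eq:kronecker-lambda-factor}, using $\Im\tau\sim\frac1\pi\log(16/|\lambda|)$ and $|\etaD|^4\sim|q|^{1/3}$ with $q=e^{2\pi i\tau}\sim\lambda/16$.
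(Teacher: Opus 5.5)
Your proposal follows the paper's own proof. You insert $\lambda_1\to1-b$, $\lambda_2=-\frac{1-b}{b}\varepsilon(1+o(1))$, the small-$\lambda$ area asymptotics and the limit of $\cP_{\varepsilon,b}$ into \eqref{eq:main-formula} and \eqref{eq:general-area}, and then trade $|\varepsilon|^{1/6}$ for $e^{-\pi H_\varepsilon/(3\ell)}$. The one difference is the neck length. You integrate $|\omega_{\boldsymbol c}|$ across the neck, while the paper reads the length off as $\Area/\ell$, using that the area of the neck complement stays bounded. Both routes work. Your intermediate formulas are all correct: the $j=1$ and $j=2$ factors, $|\lambda_2|^{1/3}$, and the expression for $|\varepsilon|^{1/6}$.

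Three details, as written, need fixing.

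(i) Over $\sqrt{|\varepsilon|}\lesssim|x|\lesssim1$, each of the two sheets contributes $\frac{2|c_0|}{\sqrt{|b|}}\log(1/\sqrt{|\varepsilon|})=\frac{|c_0|}{\sqrt{|b|}}\log(1/|\varepsilon|)$, not $\frac{2|c_0|}{\sqrt{|b|}}\log(1/|\varepsilon|)$. Only the two halves together give the leading term of $H_\varepsilon$; as you wrote it, you would get $2H_\varepsilon$.

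(ii) Your justification of $\Area(\PP^1,m_\lambda)=2\pi\log(16/|\lambda|)(1+o(1))$ is wrong. The continued factor $\lambda^{-1/2}F(\lambda^{-1})$ does not tend to a constant. By the logarithmic connection formula for $F$ at $\infty$, it also grows like $\pi^{-1}\log(16/|\lambda|)$. What stays bounded is the vanishing half-period: $\lambda^{-1/2}F(\lambda^{-1})\pm iF(1-\lambda)\to1$, with the sign depending on the half-plane containing $\lambda$. The $\log^2$ terms in \eqref{eq:base-area-function} cancel in the real part. Your Kronecker check via \eqref{eq:kronecker-lambda-factor} is sound, but only with the nome $q=e^{\pi i\tau}\sim\lambda/16$. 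With $q=e^{2\pi i\tau}$, neither $\Im\tau\sim\pi^{-1}\log(16/|\lambda|)$ nor $|\etaD|^4\sim|q|^{1/3}$ holds. In any case, this asymptotic is already recorded in \eqref{eq:area-to-det}, and that is all the paper uses.

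(iii) The trailing factor $\pi$ in your list of numerical constants is spurious. Indeed $\frac{1}{8\pi^3}\cdot2\pi\cdot\frac{4\pi}{2}\cdot16^{1/6}=2^{-1/3}\pi^{-1}$, whereas with the extra $\pi$ you would get $2^{-1/3}$.

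With these corrections, your argument coincides with the paper's.
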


\begin{proof}
The circumference $\ell_\varepsilon$ is the modulus of the period of
$\omega_{\boldsymbol c}$ along the vanishing cycle.  After the
substitution $x=\sqrt\varepsilon\,t$, the term containing $c_1$ is
odd. Hence the period is
\[
 4c_0\int_{-1}^{1}
 \frac{dt}
 {\sqrt{(\varepsilon t^2-1)(t^2-1)(\varepsilon t^2-b)}}
 =\frac{4\pi c_0}{\sqrt{-b}}\bigl(1+o(1)\bigr).
\]
Taking the modulus gives
$\ell_\varepsilon=\ell(1+o(1))$ as stated.

The Legendre parameters in~\eqref{eq:lambda-coordinates} satisfy
\[
 \lambda_1\longrightarrow1-b,
 \qquad
 \lambda_2=-\frac{1-b}{b}\varepsilon\bigl(1+o(1)\bigr).
\]
Consequently, $H_\varepsilon$ from~\eqref{eq:meromorphic-cylinder-length}    and $\ell$ from~\eqref{eq:meromorphic-cylinder-circumference} satisfy
\[
 \exp\left(-\frac{2\pi H_\varepsilon}{\ell}\right)
 =\frac{|\lambda_2|}{16}\bigl(1+o(1)\bigr).
\]
Thanks to the explicit expression~\eqref{eq:base-area-function} for the area  $\Area(\PP^1,m_{\lambda_1})$,
we obtain 
\[
 \begin{aligned}
 |\lambda_1(\lambda_1-1)|^{1/3}
 \Area(\PP^1,m_{\lambda_1})
 &=
 |b(1-b)|^{1/3}\Area(\PP^1,m_{1-b})+ o(1),\\
 |\lambda_2(\lambda_2-1)|^{1/3}
 \Area(\PP^1,m_{\lambda_2})
 &=2\pi|\lambda_2|^{1/3}
   \log\frac{16}{|\lambda_2|}
   \bigl(1+o(1)\bigr).
 \end{aligned}
\]
In particular,
\[
 \Area(\PP^1,m_{\lambda_2})
 =\frac{\pi\sqrt{|b|}}{|c_0|}
  H_\varepsilon\bigl(1+o(1)\bigr).
\]
In the explicit formula~\eqref{eq:general-area}    for the total area $ \Area(X_{\varepsilon,b},|\omega_{\boldsymbol c}|^2)$ in Theorem~\ref{thm:main}, the divergent
contribution comes from the area  $\Area(\PP^1,m_{\lambda_2})$ of the degenerating sphere; hence  the preceding asymptotic formula proves
\eqref{eq:meromorphic-area-asymptotic}.  Since the area of the neck complement
remains bounded,  the length of the stretching cylinder is asymptotic to 
$
 \Area(X_{\varepsilon,b},|\omega_{\boldsymbol c}|^2)/ \ell,
$
or, equivalently, it is asymptotic to $H_\varepsilon$ in~\eqref{eq:meromorphic-cylinder-length}.

Furthermore, the binary sextic satisfies
\[
 \cP_{a,b}(c_0,c_1)
 =c_0^2(c_0^2-c_1^2)(c_0^2-bc_1^2)(1+o(1)),
\]
whereas the definition of $H_\varepsilon$ and the value of $\ell$
imply
\[
 |\varepsilon|^{1/6}
 =2^{2/3}\frac{|b|^{1/6}}{|1-b|^{1/6}}
 \exp\left(-\frac{\pi H_\varepsilon}{3\ell}\right).
\]
  Inserting the asymptotics of
the two spherical factors and of the binary sextic into the
determinant formula of Theorem~\ref{thm:main}, and then using the area
asymptotics, we obtain
\eqref{eq:meromorphic-determinant-asymptotic}.
\end{proof}

In~\cite[Theorem~13.7]{BismutBost}, Bismut and Bost consider a smooth
surface degenerating by stretching a flat product cylinder along a
nonseparating cycle.
If $L$ denotes the cylinder length divided by its circumference, they
prove
\[
 \Det\Delta_L
 =C_{\mathrm{BB}}L^2e^{-\pi L/3}\bigl(1+o(1)\bigr),
\]
where the constant $C_{\mathrm{BB}}$ is not determined.  To compare this formula
with Proposition~\ref{prop:meromorphic-nonseparating-asymptotic}, we
introduce the cylinder parameter 
$
 L_\varepsilon={H_\varepsilon}/{\ell}$.
Then the asymptotics~\eqref{eq:meromorphic-determinant-asymptotic} for $\Det\Delta_{|\omega_{\boldsymbol c}|^2}$  determines the
corresponding Bismut--Bost coefficient:
\[
 C_{\mathrm{BB}}
 =2^{11/3}\pi e^{6\zeta_R'(-1)}|c_0|^{13/6}
 \left|(c_0^2-c_1^2)(c_0^2-bc_1^2)\right|^{1/12}
 \frac{|1-b|^{1/2}}{|b|}
 \Area(\PP^1,m_{1-b}).
\]
Thus Proposition~\ref{prop:meromorphic-nonseparating-asymptotic}
recovers the Bismut--Bost asymptotics for the present conical
family and evaluates its coefficient.
Although their theorem is originally
stated for smooth metrics, the result remains valid for the present
conical family by the same argument as in the proof of
Lemma~\ref{lem:conical-analytic-surgery}.

\subsection{The simultaneous two-node degeneration}
\label{sec:two-node-degeneration}

Fix $b\in\CC\setminus\{0,1\}$, let
\[
 a=1+\varepsilon,
 \qquad
 \varepsilon\to0,
\]
and consider the differential $\omega_{\boldsymbol c}$ on $X_{a,b}$,
where
\[
 (c_0^2-c_1^2)(c_0^2-bc_1^2)\ne0.
\]
The limiting curve has two nonseparating nodes, at $x=1$ and $x=-1$.
Its normalization is the rational curve
\[
 R:\quad v^2=x^2-b,
 \qquad
 (x,v)\longmapsto\bigl(x,y=(x^2-1)v\bigr).
\]
The two preimages of each node are identified independently.  Near
either node, $u=x^2-1$ puts the local equation into the form
$\widetilde y^{\,2}=u(u-\varepsilon)$; thus both nodes are smoothed by the
same parameter $\varepsilon$.  This equality of the two smoothing
parameters is the algebraic restriction imposed by the bielliptic
involution.

On $R$ the differential $\omega_{\boldsymbol c}$ converges to
\[
\alpha:= 2(c_0+c_1x)\frac{dx}{(x^2-1)v}.
\]
The limiting differential has simple poles at the four preimages of
the nodes.  The smooth part of the limiting nodal curve is the punctured
normalization $R^\circ$ obtained by removing these four points from
$R$.  The differential also has two simple zeros: the two points over
$x=-c_0/c_1$ if $c_1\ne0$, and the two points at infinity if $c_1=0$.
The two ends
over $x=1$ have circumference
$2\pi|c_0+c_1|/\sqrt{|1-b|}$, and the two ends over $x=-1$ have
circumference $2\pi|c_0-c_1|/\sqrt{|1-b|}$.  The metric limit is
therefore a complete flat sphere with four cylindrical ends and two
conical singularities of order $1$, i.e. of angle $4\pi$.

\begin{figure}[H]
\centering
\begin{tikzpicture}[line cap=round,line join=round]
 \definecolor{cycleblue}{RGB}{66,82,115}
 \node[inner sep=0] at (-3.45,0)
  {\includegraphics[width=5.65cm]{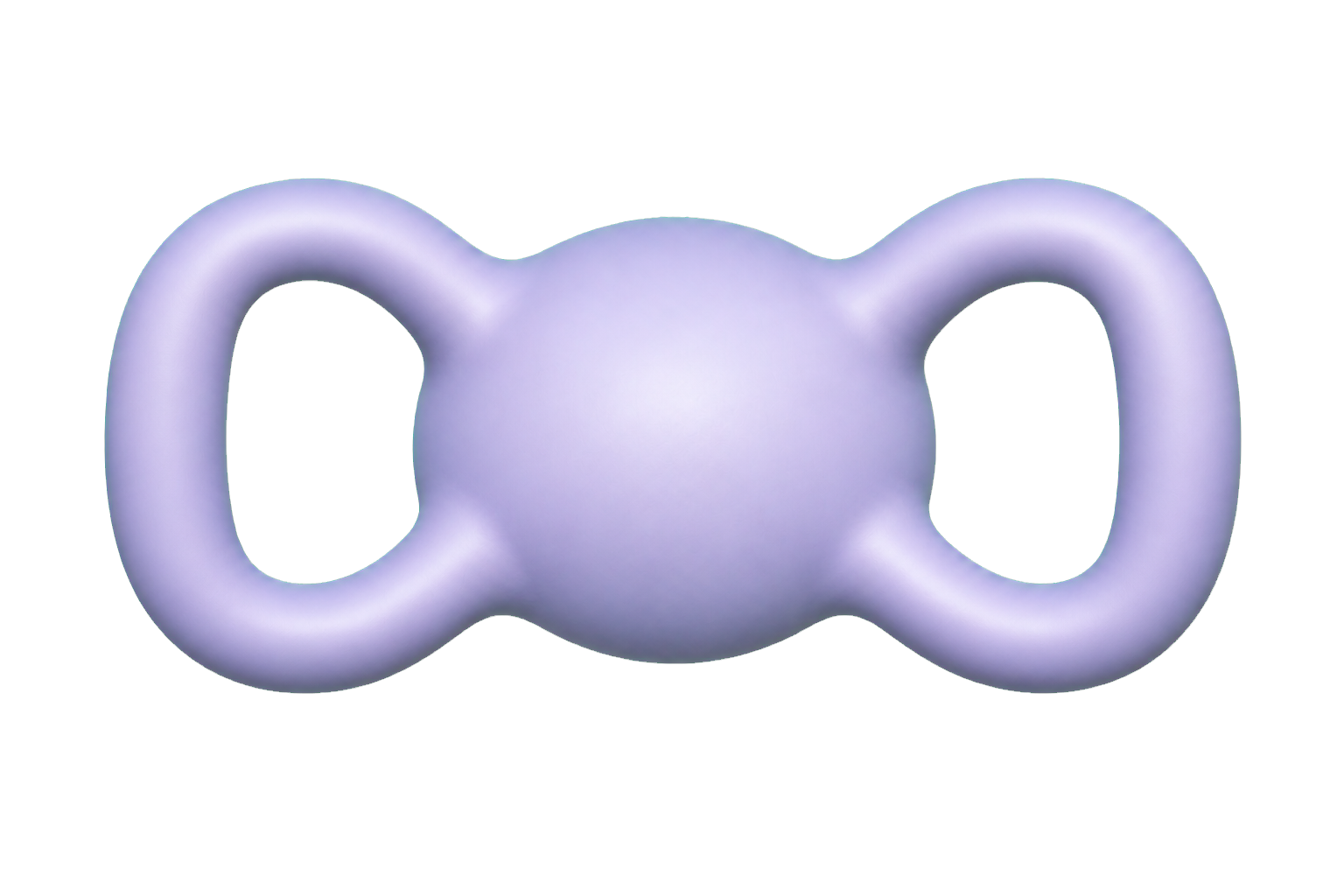}};
 \node[inner sep=0] at (3.45,0)
  {\includegraphics[width=5.65cm]{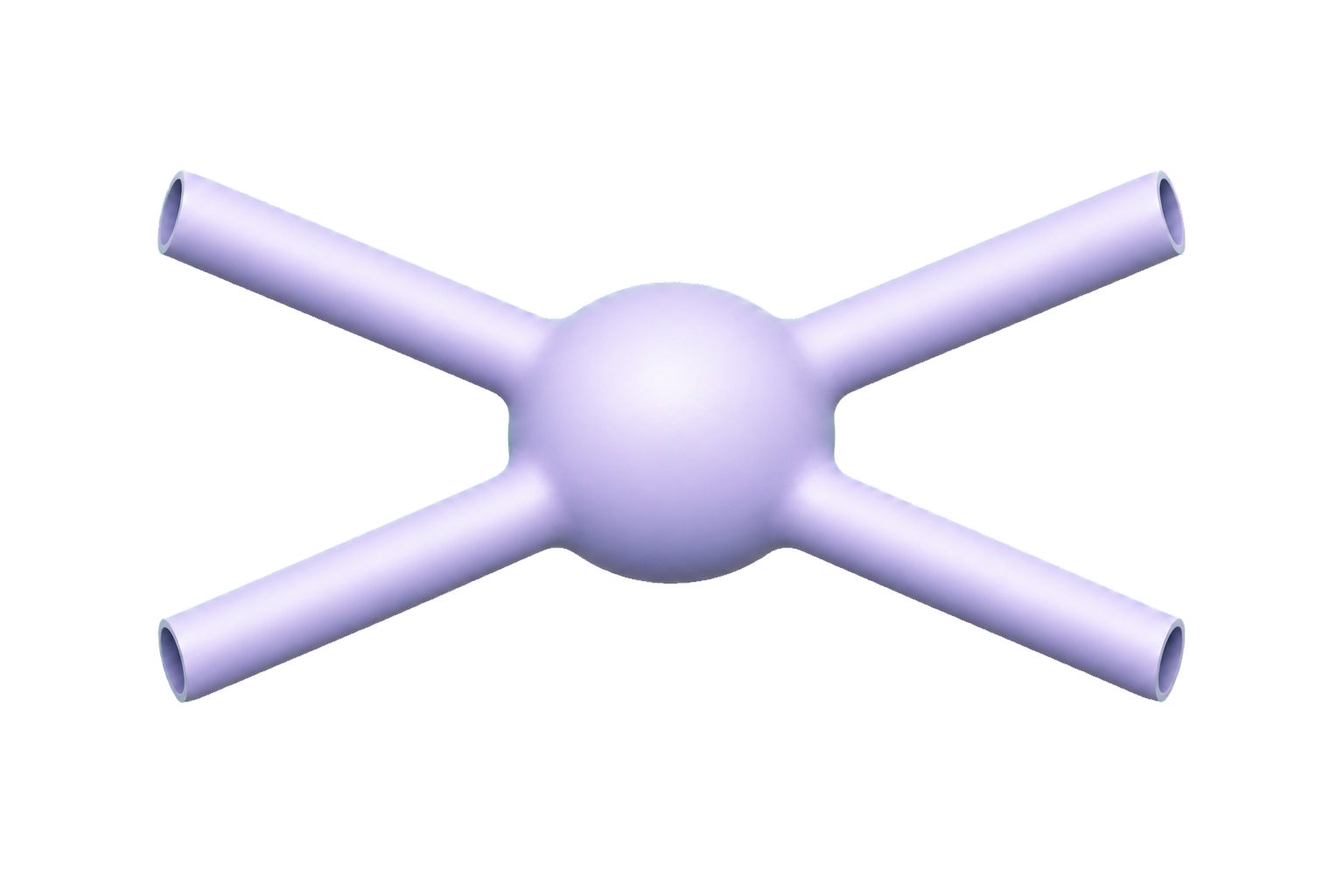}};
 \draw[-{Latex[length=2.1mm]},thick] (-.40,0)--(.40,0);
 \foreach \x in {-5.58,-1.32}{
  \draw[cycleblue,densely dashed,line width=1.05pt]
   ({\x-.27},0) arc[start angle=180,end angle=0,
                     x radius=.27cm,y radius=.095cm];
  \draw[cycleblue,line width=1.2pt]
   ({\x+.27},0) arc[start angle=0,end angle=-180,
                     x radius=.27cm,y radius=.095cm];
 }
 \node[below] at (-3.45,-1.70)
  {$\bigl(X_{a,b},|\omega_{\boldsymbol c}|^2\bigr)$};
 \node[below,font=\footnotesize] at (3.45,-1.70)
  {$\displaystyle\left(R^\circ,
    |\alpha|^2\right)$};
\end{tikzpicture}
\caption{Simultaneous two-node degeneration: two cycles collapse and
the normalization $R^\circ$ has four cylindrical ends.}
\label{fig:two-node-degeneration}
\end{figure}

\begin{proposition}\label{prop:two-node-asymptotic}
Set
\[
 L_1(\varepsilon)=\log\frac{16|1-b|}{|\varepsilon|},
 \qquad
 L_2(\varepsilon)=\log\frac{16|a-b|}{|b\varepsilon|},
 \qquad a=1+\varepsilon,\quad\text{as}\quad\varepsilon\to0.
\]
Under the degeneration described above, the two nonseparating necks
stretch into long flat cylinders and break.  Away from the necks, the
metric converges to the complete flat metric $|\alpha|^2$ on the
four-ended sphere $R^\circ$.  The determinant tends to zero, whereas the area diverges
logarithmically.  More precisely, as $\varepsilon\to0$, we have
\begin{equation}\label{eq:two-node-determinant-asymptotic}
 \begin{split}
 \Det\Delta_{|\omega_{\boldsymbol c}|^2}
 ={}&4e^{6\zeta_R'(-1)}
 \frac{|b|^{1/6}
 |c_0^2-c_1^2|^{1/6}
 |c_0^2-bc_1^2|^{1/12}}
 {|1-b|^{5/3}}
 |\varepsilon|^{2/3}L_1(\varepsilon)L_2(\varepsilon)\\
 &\times
 \left(
  |c_1|^2L_1(\varepsilon)+|c_0|^2L_2(\varepsilon)
 \right)
 \bigl(1+o(1)\bigr),
\end{split}
\end{equation}
and
\begin{equation}\label{eq:two-node-area-asymptotic}
 \Area(X_{1+\varepsilon,b},|\omega_{\boldsymbol c}|^2)
 =\frac{8\pi}{|1-b|}
 \left(
  |c_1|^2L_1(\varepsilon)+|c_0|^2L_2(\varepsilon)
 \right)
 \bigl(1+o(1)\bigr).
\end{equation}
\end{proposition}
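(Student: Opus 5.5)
Both asymptotic formulas are read off from Theorem~\ref{thm:main} and from the area formula \eqref{eq:general-area}, once each factor is expanded as $a=1+\varepsilon\to0$. No separate geometric argument is needed: the statements about the necks and the limit metric on $R^\circ$ only describe the situation, and the quantitative content lies in \eqref{eq:two-node-determinant-asymptotic} and \eqref{eq:two-node-area-asymptotic}.

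\textbf{Step 1: the Legendre parameters.} From \eqref{eq:lambda-coordinates} we get
\[
\lambda_1=\frac{b-1}{\varepsilon},\qquad
\lambda_2=\frac{(1+\varepsilon)(1-b)}{1+\varepsilon-b}\longrightarrow1 .
\]
Thus $\lambda_1\to\infty$ and $\lambda_2\to1$, so both elliptic quotients degenerate. This matches the fact that both nodes are smoothed by the same parameter $\varepsilon$.

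\textbf{Step 2: the degenerate areas.} I need the asymptotics of $\Area(\PP^1,m_\lambda)$ near the cusps $\lambda=\infty$ and $\lambda=1$, in the same form as the expansion near $\lambda=0$ already used in \eqref{eq:area-to-det}.

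Near $\lambda=\infty$, formula \eqref{eq:base-area-function} gives $F(\lambda^{-1})\to1$ and $F(1-\lambda)\sim\pi^{-1}\lambda^{-1/2}\log(16\lambda)$. Hence
\[
\Area(\PP^1,m_{\lambda})=2\pi|\lambda|^{-1}\log(16|\lambda|)\,(1+o(1)),\qquad \lambda\to\infty .
\]
Near $\lambda=1$, I use the symmetry $\lambda\mapsto1-\lambda$ applied to the expansion at $0$. Equivalently, I use the Möbius invariance $m_\lambda\cong|\lambda|^{-1}m_{1/\lambda}$ together with $u\mapsto1-u$. This yields
\[
\Area(\PP^1,m_{\lambda_2})=2\pi\log\frac{16}{|1-\lambda_2|}\,(1+o(1)).
\]
Since $1-\lambda_2=-b\varepsilon/(a-b)$, the logarithm on the right is exactly $L_2(\varepsilon)$. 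For $\lambda_1$, the same computation gives $\log(16|\lambda_1|)=L_1(\varepsilon)$ and
\[
|\lambda_1(\lambda_1-1)|^{1/3}\Area(\PP^1,m_{\lambda_1})=2\pi|\lambda_1|^{-1/3}L_1(1+o(1)).
\]

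\textbf{Step 3: the area formula.} Insert these asymptotics into \eqref{eq:general-area}, using $|a-1|=|\varepsilon|$ and $|a-b|\to|1-b|$. The two terms become
\[
\frac{4|c_1|^2}{|\varepsilon|}\cdot\frac{2\pi|\varepsilon|}{|1-b|}L_1
\quad\text{and}\quad
\frac{4|c_0|^2}{|1-b|}\cdot2\pi L_2 ,
\]
and their sum is \eqref{eq:two-node-area-asymptotic}.

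\textbf{Step 4: the determinant formula.} Take \eqref{eq:main-formula} and multiply by the area just obtained. The remaining factors behave as follows:
\begin{itemize}
\item $|ab|^{-1/6}\to|b|^{-1/6}$;
\item $\cP_{a,b}\to(c_0^2-c_1^2)^2(c_0^2-bc_1^2)$, since $c_0^2-ac_1^2\to c_0^2-c_1^2$;
\item the $\lambda_1$ factor is $2\pi|\varepsilon/(1-b)|^{1/3}L_1$;
\item the $\lambda_2$ factor is $|1-\lambda_2|^{1/3}\cdot2\pi L_2=|b\varepsilon/(1-b)|^{1/3}2\pi L_2$.
\end{itemize}
Collecting powers, the product of these contributions is proportional to
\[
|\varepsilon|^{2/3}\,|b|^{1/6}\,|c_0^2-c_1^2|^{1/6}\,|c_0^2-bc_1^2|^{1/12}\,L_1L_2\bigl(|c_1|^2L_1+|c_0|^2L_2\bigr).
\]
The numerical constants combine as
\[
\frac{1}{8\pi^3}\cdot4\pi^2\cdot8\pi=4,
\]
and the powers of $|1-b|$ combine to $|1-b|^{-2/3-1}=|1-b|^{-5/3}$. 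This gives \eqref{eq:two-node-determinant-asymptotic}. The claims that the determinant tends to zero and that the area diverges logarithmically then follow at once.

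\textbf{Main obstacle.} The delicate step is Step~2. I must fix the exact constants $16$ in the cusp expansions of $\Area(\PP^1,m_\lambda)$ at $\lambda=\infty$ and $\lambda=1$, and show that the real combination in \eqref{eq:base-area-function} is single-valued, so that no monodromy term survives. I would do this by reducing both cusps to the already used expansion at $\lambda\to0$. The reduction uses the six-element anharmonic symmetry of $m_\lambda$ and tracks the scaling factor $|\lambda|$ exactly.
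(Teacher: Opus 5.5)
Your proposal is correct and follows the paper's proof essentially step for step. The paper likewise expands $\Area(\PP^1,m_{\lambda_j})$ at the two cusps, using the local coordinates $\mu_1=\lambda_1^{-1}$ and $\mu_2=1-\lambda_2$. It then uses $\cP_{1+\varepsilon,b}\to(c_0^2-c_1^2)^2(c_0^2-bc_1^2)$ and substitutes directly into \eqref{eq:main-formula} and \eqref{eq:general-area}. The only slip is a sign: $1-\lambda_2=b\varepsilon/(a-b)$, not $-b\varepsilon/(a-b)$. This is harmless, because only $|1-\lambda_2|$ enters the computation.
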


\begin{proof}
The two parameters $\lambda_1$ and $\lambda_2$ in~\eqref{eq:lambda-coordinates} approach
different cusps.  Local coordinates at these cusps are
\[
 \mu_1=\lambda_1^{-1}=\frac{\varepsilon}{b-1},
 \qquad
 \mu_2=1-\lambda_2=\frac{b\varepsilon}{a-b}.
\]
Explicit expression~\eqref{eq:base-area-function} for the area of $(\PP^1,m_\lambda)$ gives
\[
 \begin{aligned}
 \Area(\PP^1,m_{\lambda_1})
 &=2\pi|\mu_1|L_1(\varepsilon)(1+o(1)),\\
 \Area(\PP^1,m_{\lambda_2})
 &=2\pi L_2(\varepsilon)(1+o(1)),\\
 |\lambda_1(\lambda_1-1)|^{1/3}
 \Area(\PP^1,m_{\lambda_1})
 &=2\pi|\mu_1|^{1/3}L_1(\varepsilon)(1+o(1)),\\
 |\lambda_2(\lambda_2-1)|^{1/3}
 \Area(\PP^1,m_{\lambda_2})
 &=2\pi|\mu_2|^{1/3}L_2(\varepsilon)(1+o(1)).
 \end{aligned}
\]
Together with
\[
 \cP_{1+\varepsilon,b}(c_0,c_1)
 \longrightarrow
 (c_0^2-c_1^2)^2(c_0^2-bc_1^2),
\]
direct substitution into the explicit formula~\eqref{eq:main-formula} for  $\Det\Delta_{|\omega_{\boldsymbol c}|^2}$ and the explicit formula~\eqref{eq:general-area} for  $ \Area(X_{1+\varepsilon,b},|\omega_{\boldsymbol c}|^2)$,  proves
\eqref{eq:two-node-determinant-asymptotic} and
\eqref{eq:two-node-area-asymptotic}, respectively.
\end{proof}

\begin{remark}[Comparison with Bismut--Bost]
Bismut and Bost do not state the two-node scalar asymptotic formula
separately.  The contribution of an arbitrary number of nodes to the
Quillen metric is given in
\cite[Theorem~2.2 and Corollary~2.3, pp.~8--9]{BismutBost}, while the
passage from the Quillen metric to the scalar determinant in the
one-node case is carried out in
\cite[Theorem~13.2 and Propositions~13.3--13.5,
pp.~94--99]{BismutBost}.  With $u=x^2-1$ and
$\widetilde y=y/\sqrt{x^2-b}$, the local equation at $\varepsilon=0$ is
\[
 \widetilde y^{\,2}=u^2,
\]
which describes two branches crossing at a node.  For $\varepsilon\ne0$, it
becomes
\[
 \widetilde y^{\,2}=u(u-\varepsilon),
\]
replacing the crossing by a neck.  After centering $u$ at
$\varepsilon/2$, it becomes the standard plumbing model with parameter
$-\varepsilon^2/4$.  Since the same $\varepsilon$ occurs at $x=1$ and $x=-1$,
the two necks open simultaneously with the same plumbing parameter.
The factor $|\varepsilon|^{2/3}$ follows from
\cite[Corollary~2.3]{BismutBost}, while the $L^2$ argument in
\cite[Propositions~13.3--13.5]{BismutBost} accounts for three logarithmic factors:
one from the constant function and one from each of the two
degenerating holomorphic differentials.  Since
$L_j(\varepsilon)=\log(1/|\varepsilon|)+O(1)$, $j=1,2$, the logarithmic factor
in the asymptotic formula of
Proposition~\ref{prop:two-node-asymptotic} satisfies
\[
 L_1(\varepsilon)L_2(\varepsilon)
 \bigl(|c_1|^2L_1(\varepsilon)+|c_0|^2L_2(\varepsilon)\bigr)
 =(|c_0|^2+|c_1|^2)\log^3\frac1{|\varepsilon|}
 \bigl(1+o(1)\bigr).
\]
Hence
\[
 \Det\Delta_{|\omega_{\boldsymbol c}|^2}
 =C_{\mathrm{BB}}|\varepsilon|^{2/3}
 \log^3\frac1{|\varepsilon|}\bigl(1+o(1)\bigr),
\]
where $C_{\mathrm{BB}}$ is not evaluated in \cite{BismutBost}.
Proposition~\ref{prop:two-node-asymptotic} evaluates the coefficient
\[
 C_{\mathrm{BB}}
 =4e^{6\zeta_R'(-1)}
 \frac{|b|^{1/6}|c_0^2-c_1^2|^{1/6}
 |c_0^2-bc_1^2|^{1/12}}{|1-b|^{5/3}}\times (|c_0|^2+|c_1|^2).
\]
As before, the assumption that the metrics are smooth is removed by
the same argument as in the proof of
Lemma~\ref{lem:conical-analytic-surgery}.

We also note that although \cite{MullerMuller} contains no result
directly applicable to either the one-node nonseparating degeneration
or the simultaneous two-node degeneration, its 
analytic-surgery scheme for manifolds with boundary, combined with BFK self-gluing, can be adapted in both cases to express the Bismut--Bost constants through relative determinants of the corresponding limiting surfaces.  We do not pursue this here.

\end{remark}

\end{document}